\definecolor{verydarkblue}{rgb}{0,0,0.5}
\theoremstyle{plain}
\newtheorem{introtheorem}{Theorem}
\crefname{introtheorem}{Theorem}{Theorems}
\newtheorem{theorem}{Theorem}
\newtheorem{lemma}[theorem]{Lemma}
\newtheorem{proposition}[theorem]{Proposition}
\newtheorem{corollary}[theorem]{Corollary}
\newtheorem{question}[theorem]{Question}
\theoremstyle{definition}
\newtheorem{definition}[theorem]{Definition}
\newtheorem{example}[theorem]{Example}
\theoremstyle{remark}
\newtheorem{remark}{Remark}
\numberwithin{theorem}{section}
\numberwithin{equation}{section}
\numberwithin{remark}{section}
\newcommand{\Q}{\mathbb{Q}}
\newcommand{\PP}{\mathbb{P}}
\newcommand{\C}{\mathbb{C}}
\newcommand{\aid}{\mathfrak{a}}
\newcommand{\fm}{\mathfrak{m}}
\newcommand{\OS}{\mathcal{O}}
\newcommand{\HH}{\mathcal{H}}
\newcommand{\spec}{\operatorname{Spec}}
\newcommand{\codim}{\operatorname{codim}}
\newcommand{\ord}{\operatorname{ord}}
\newcommand{\vol}{\operatorname{vol}}
\newcommand{\lct}{\operatorname{lct}}
\newcommand{\ci}{\operatorname{ci}}
\newcommand{\bs}{\operatorname{Bs}}
\newcommand{\bi}[2]{\left(\begin{array}{c}
#1\\
#2
\end{array}\right)}
\title[Higher Codimensional Alpha Invariants]{Higher Codimensional Alpha Invariants and Characterization of Projective Spaces}
\author{Ziwen Zhu}
\address{Department of Mathematics, University of Utah, Salt Lake City, UT 84112, USA}
\email{{\tt zzhu@math.utah.edu}}
\thanks{Research partially supported by NSF Grant DMS-1402907}
\begin{document}
\begin{abstract}
We generalize the definition of alpha invariant to arbitrary codimension. We also give a lower bound of these alpha invariants for K-semistable $\Q$-Fano varieties and show that we can characterize projective spaces among all K-semistable Fano manifolds in terms of higher codimensional alpha invariants. Our results demonstrate the relation between alpha invariants of any codimension and volumes of Fano manifolds in the characterization of projective spaces.
\end{abstract}
\maketitle
\section{Introduction}
We work over the complex number field $\C$. A variety $X$ is called $\Q$-Fano if $X$ is a normal projective variety with klt singularities such that the anti-canonical divisor $-K_X$ is an ample $\Q$-divisor. A Fano manifold is a smooth $\Q$-Fano variety. we will always use $n$ to denote the dimension of the $\Q$-Fano variety $X$.

It is well-known that a Fano manifold admits a K\"ahler-Einstein metric if and only if it is K-polystable due to \cite{D1,D2,D3,tian}. More generally, we would like to study K-semistable $\Q$-Fano varieties. Recent work of Kento Fujita, Yuji Odaka and Chen Jiang shows that among K-semistable Fano manifolds, the projective space $\PP^n$ can be characterized by either of the following two properties:
\begin{enumerate}
\item \cite{fujita2015optimal} $(-K_X)^n\geq (n+1)^n$;
\item \cite{fujita2016k,sma} $\displaystyle{\alpha(X)\leq \frac{1}{n+1}}$.
\end{enumerate}
Here $(-K_X)^n$ is the volume of $X$, and $\alpha(X)$ is the alpha invariant of $X$.

The purpose of this paper is to show that the above two characterizations of projective spaces are special cases of a more general one where cycles of intermediate codimensions are considered. 

We first generalize the definition of alpha invariant:
\begin{definition}
Let $X$ be a $\Q$-Fano variety of dimension $n$. For $1\leq k\leq n$, the complete intersection $\ci(L_1,\ldots,L_k)$ in $X$ cut out by effective Cartier divisors $L_1,\ldots,L_k$ is defined to be the scheme-theoretic intersection of $L_1,\ldots,L_k$ with the expected codimension $k$. Then we define the alpha invariant of codimension $k$ for $X$ to be
$$
\alpha^{(k)}(X):=\inf_{r}\left\{\lct\left(X,\frac{1}{r}Z\right)\bigg|Z=\ci(L_1,\ldots,L_k),~L_1,\ldots,L_k\in |-rK_X|\right\}.
$$
\end{definition}
\begin{remark}
When $k=1$, the generalized alpha invariant $\alpha^{(1)}(X)$ is just the usual alpha invariant $\alpha(X)$. We will use $\alpha^{(1)}(X)$ to denote the usual alpha invariant for the rest part of the paper.
\end{remark}
Tian proved in \cite{tian_alpha} that a Fano manifold $X$ of dimension $n$ admits a K\"ahler-Einstein metric if $\alpha^{(1)}(X)>n/(n+1)$. Fujita improved the theorem in \cite{fujita_2017} by showing that a Fano manifold $X$ of dimension $n$ is K-stable if $\alpha^{(1)}(X)\geq n/(n+1)$. A recent related result by Stibitz and Zhuang in \cite{stibitz2018k} shows that a birationally superrigid (or more generally log maximal singularity free) Fano variety $X$ is K-stable (resp. K-semistable) if $\alpha^{(1)}(X)>1/2$ (resp. $\alpha^{(1)}(X)\geq1/2$). The result is later improved by Zhuang in \cite{zhuang2018birational}. As a first application of higher codimensional alpha invariants, we note that their results can be stated in terms of the higher codimensional alpha invariant $\alpha^{(2)}$. (See Theorem \ref{fsz} in Section \ref{min}).


For K-semistable $\Q$-Fano varieties, we can give a lower bound of higher codimensional alpha invariants as our first main result:
\begin{introtheorem}\label{ineq}
Let $X$ be a K-semistable $\Q$-Fano variety of dimension $n$. Then 
\begin{equation}\label{inf}
\alpha^{(k)}(X)\geq \frac{k}{n+1}.
\end{equation}
\end{introtheorem}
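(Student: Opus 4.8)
The plan is to reduce the statement to a single divisorial valuation and then feed the complete-intersection structure into the valuative criterion for K-semistability. First I would apply the log-discrepancy description of the log canonical threshold. Writing $Z=\ci(L_1,\dots,L_k)$ with ideal $\I_Z=(f_1,\dots,f_k)$, one has $\ord_E(\I_Z)=\min_i\ord_E(L_i)$ for every prime divisor $E$ over $X$, so that with $D_i:=\tfrac1r L_i\sim_{\Q}-K_X$,
\[
\lct\!\left(X,\tfrac1r Z\right)=\inf_{E}\frac{A_X(E)}{\min_i\ord_E(D_i)}.
\]
Hence \cref{ineq} is equivalent to the valuation-wise bound $A_X(E)\ge \tfrac{k}{n+1}\min_i\ord_E(D_i)$ for all $E$. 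Next I would invoke the Fujita--Li valuative criterion: since $X$ is K-semistable, $A_X(E)\ge S(E)$ for every $E$, where $S(E)=\tfrac1V\int_0^\infty\vol(\pi^\ast(-K_X)-tE)\,dt$ and $V=(-K_X)^n$. It therefore suffices to prove the positivity-theoretic inequality $S(E)\ge\tfrac{k}{n+1}s$ with $s:=\min_i\ord_E(D_i)$, an estimate that no longer mentions singularities of $Z$ and into which the hypothesis of proper intersection must be inserted.

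The heart of the proof is this lower bound for $S(E)$. After replacing $-K_X$ by a Cartier multiple I would work with the section ring $R=\bigoplus_m H^0(-mK_X)$, filtered by $\F^{j}R_m=\{\sigma:\ord_E(\sigma)\ge j\}$, for which $S(E)=\lim_m\frac{1}{m\dim R_m}\sum_{j\ge1}\dim\F^{j}R_m$. Let $\sigma_i\in R_r$ be the section cutting out $L_i$, so $\ord_E(\sigma_i)\ge rs$, and set $I=(\sigma_1,\dots,\sigma_k)$. The decisive use of the hypothesis is that, because the $L_i$ meet in the expected codimension $k$, the $\sigma_i$ form a regular sequence; consequently the associated graded ring $\bigoplus_q I^q/I^{q+1}$ is a polynomial ring in $k$ variables over the coordinate ring $R/I$ of the $(n-k)$-dimensional scheme $Z$, and $\dim(I^q/I^{q+1})_m$ has leading term $\binom{q+k-1}{k-1}\dim(R/I)_{m-rq}$. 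Since $(I^q)_m\subseteq\F^{qrs}R_m$, summing the filtration over $j$ and rewriting $\sum_{j\ge1}\dim\F^{j}R_m$ as a Riemann sum in the variable $q/m$ produces the Beta integral
\[
\int_0^1 u^{k}(1-u)^{\,n-k}\,du=B(k+1,n-k+1)=\frac{k!\,(n-k)!}{(n+1)!}.
\]
Dividing by the asymptotics of $m\dim R_m$ and carrying out the (routine) bookkeeping of the degrees $r$ and the leading Hilbert coefficient of $R/I$, everything cancels to give exactly $S(E)\ge\frac{k}{n+1}s$; the constant $\frac{k}{n+1}$ is precisely the value of that Beta integral, which explains the shape of the bound in \cref{ineq}.

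The main obstacle is justifying these Hilbert-function asymptotics on a possibly singular klt variety. One must verify that $\sigma_1,\dots,\sigma_k$ is genuinely a regular sequence in the anticanonical ring (finitely generated because $-K_X$ is ample) and that the syzygies among the $\sigma_i$ perturb only the lower-order terms, so that the leading coefficient $\binom{q+k-1}{k-1}\dim(R/I)_{m-rq}$ is correct; proper intersection in the expected codimension is exactly what secures this. A second delicate point is that the inclusion $(I^q)_m\subseteq\F^{qrs}R_m$ captures only part of the filtration, so one must check that this partial information is already asymptotically sharp enough to recover the full constant $\frac{k}{n+1}$ rather than a weaker one. As a consistency check, the estimate is attained on $\PP^n$: for $n=k=2$, taking $E$ the blow-up of a point and the $L_i$ two triples of concurrent lines through that point gives $A_X(E)=S(E)=\frac{k}{n+1}\min_i\ord_E(D_i)$, so the constant in \cref{ineq} cannot be improved, in agreement with the characterization of projective space developed later.
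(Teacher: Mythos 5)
Your argument is correct, but it takes a genuinely different route from the paper's. The paper applies the subscheme version of the valuative criterion directly to $Z$: by Theorem \ref{betaZ}, $\lct(X,Z)\vol_X(-K_X)\geq\int_0^\infty\vol_Y(\sigma^\ast(-K_X)-xF)\,dx$ where $\OS_Y(-F)=\sigma^{-1}I_Z\cdot\OS_Y$; it then truncates the integral to $[0,1/r]$, where global generation of $I_Z\cdot\OS_X(-rK_X)$ makes $\sigma^\ast(-K_X)-xF$ nef and big, and evaluates $(\sigma^\ast(-rK_X)-xF)^n$ by intersection theory on the blow-up (Lemma \ref{vog}, via the moving/fixed decomposition $\sigma^\ast\HH=|M|+F$ and the identity $\sigma_\ast(M^{k-1}\cdot F)=[Z]$, which rests on the same multiplicity formula for complete intersections, Proposition \ref{cim}, that you implicitly use). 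You instead reduce to the divisorial criterion $A_X(E)\geq S(E)$ and bound $S(E)$ from below via the Rees-algebra filtration of the section ring; this is the natural generalization of the Fujita--Odaka proof of the $k=1$ case, whereas the paper generalizes Fujita's $\beta(Z)\geq0$ argument. The two computations are the same Beta integral in disguise: your $\int_0^1u^k(1-u)^{n-k}\,du$ versus the integration by parts of \eqref{bypar}. Your route trades the Seshadri-constant step and the cycle-theoretic Lemma \ref{vog} for two commutative-algebra verifications that you correctly flag and that do go through: the anticanonical section ring of a klt Fano is Cohen--Macaulay (Kawamata--Viehweg vanishing for $m\geq0$ plus Serre duality for $m<0$), so sections cutting out a codimension-$k$ subscheme form a regular sequence and $\bigoplus_q I^q/I^{q+1}$ is a polynomial ring over $R/I$; and the Hilbert-function asymptotics of $R/I$ are controlled uniformly by the Koszul resolution, with the boundary range $m-rq=O(1)$ contributing only lower-order terms. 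Your second worry --- that $(I^q)_m\subseteq\F^{qrs}R_m$ captures only part of the filtration --- is not actually an obstacle: one only needs a lower bound, and the one you compute already evaluates to exactly $\frac{k}{n+1}s$ (note the degree of $Z$ contributes a factor $r^k$ that cancels the powers of $r$ from the substitution $u=rq/m$), which is all the theorem requires.
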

Note that when $k=1$, the inequality \eqref{inf} is proved by Fujita and Odaka in \cite{fujita2016k}.

It is well-known that $\PP^n$ is K-semistable. Therefore by considering the log canonical thresholds of linear subspaces of $\PP^n$, together with Theorem \ref{ineq}, we know that the equality holds in \eqref{inf} when $X\cong \PP^n$. Then we have our second main result about characterization of projective spaces:
\begin{introtheorem}\label{eq}
Let $X$ be a K-semistable Fano manifold of dimension $n$.
Consider the following three statements:
\begin{enumerate}
\item $X\cong \PP^n$;
\item $\displaystyle{\alpha^{(k)}(X)=\frac{k}{n+1}}$, and it is realized by some complete intersection $Z$;
\item $(-K_X)^k$ is rationally equivalent to $lZ'$ for some rational number $l\geq (n+1)^k$ and $Z'$ an integral $(n-k)$-cycle.
\end{enumerate}
We have $(1) \Rightarrow (2)\Rightarrow(3)$. Moreover, if we assume that $k$ divides $n$, then $(3) \Rightarrow (1)$ and therefore all three statements are equivalent. 
\end{introtheorem}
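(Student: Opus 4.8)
The plan is to prove the three implications in turn, the first by explicit computation, the last by a cohomological degree count, and to isolate the geometric content in the middle implication.

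The implication $(1)\Rightarrow(2)$ reduces to a computation on $\PP^n$. Since $-K_{\PP^n}=\OS_{\PP^n}(n+1)$, I take $r=1$ and let $L_i\in|-K_{\PP^n}|$ be the divisor $\{x_i^{\,n+1}=0\}$ for $i=1,\dots,k$, so that $Z=\ci(L_1,\dots,L_k)$ is the scheme $V(x_1^{n+1},\dots,x_k^{n+1})$, which has the expected codimension $k$ and is supported on the linear subspace $\{x_1=\cdots=x_k=0\}$. Its ideal is monomial, so a direct computation (Howald's formula) gives $\lct(\PP^n,\tfrac1rZ)=\lct(\PP^n,I_Z)=\sum_{i=1}^k\tfrac1{n+1}=\tfrac{k}{n+1}$. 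Hence $\alpha^{(k)}(\PP^n)\le\tfrac{k}{n+1}$, and since $\PP^n$ is K-semistable the reverse inequality is Theorem~\ref{ineq}; therefore $\alpha^{(k)}(\PP^n)=\tfrac{k}{n+1}$, realized by $Z$.

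For $(2)\Rightarrow(3)$, let $Z=\ci(L_1,\dots,L_k)$ with $L_i\in|-rK_X|$ realize $\lct(X,\tfrac1rZ)=\tfrac{k}{n+1}$, equivalently $\lct(X,I_Z)=\tfrac{k}{r(n+1)}$. Because each $L_i$ is linearly equivalent to $-rK_X$, the associated cycle satisfies $[Z]=(-rK_X)^k=r^k(-K_X)^k$ in $\operatorname{CH}^k(X)_{\Q}$. I then localize at the generic point $\eta_W$ of a codimension-$k$ component $W$ of $Z$ along which the threshold is computed; there the ambient ring is regular of dimension $k$ and $I_{Z,\eta_W}$ is $\fm_{\eta_W}$-primary. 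The log canonical threshold--multiplicity inequality of de Fernex, Ein and Mustata gives $\lct_{\eta_W}(I_Z)\ge k/e(I_{Z,\eta_W})^{1/k}$, and rearranging the equality $\lct_{\eta_W}(I_Z)=\tfrac{k}{r(n+1)}$ yields $\mult_W Z=e(I_{Z,\eta_W})\ge (r(n+1))^k$, i.e.\ the coefficient of $[W]$ in $(-K_X)^k=\tfrac1{r^k}[Z]$ is at least $(n+1)^k$. The crux is to promote this single-component bound to the assertion that $[Z]$ is a $\Q$-multiple of the single integral cycle $[W]$: here I would invoke the equality case of that inequality, which forces $\overline{I_{Z,\eta_W}}=\fm_{\eta_W}^{\,r(n+1)}$, so that $Z$ is generically a pure thickening of $W$, and combine it with the extremality of the configuration to exclude other components. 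Granting this, $(-K_X)^k\sim_{\Q}l\,[W]$ with $l=\mult_W(Z)/r^k\ge(n+1)^k$, and $Z'=W$ establishes $(3)$.

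Finally, for $(3)\Rightarrow(1)$ I use the hypothesis $k\mid n$ and pass to integral cohomology. Put $m=n/k$ and let $\eta=[Z']\in H^{2k}(X,\Z)$, an integral class. The rational equivalence in $(3)$ gives $(-K_X)^k=l\,\eta$ in $H^{2k}(X,\Q)$, and raising to the $m$-th power and integrating over $X$ (using $km=n$) yields $(-K_X)^n=l^m\,s$, where $s=\int_X\eta^{\,m}\in H^{2n}(X,\Z)\cong\Z$ is the top self-intersection number of $\eta$. Since $(-K_X)^n>0$ and $l>0$ we have $s>0$, and integrality forces $s\ge1$; together with $l\ge(n+1)^k$ this gives $(-K_X)^n=l^m s\ge(n+1)^{km}=(n+1)^n$. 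By the volume characterization of projective space for K-semistable Fano manifolds (\cite{fujita2015optimal}, recalled as property $(1)$ in the introduction), we conclude $X\cong\PP^n$, so all three statements are equivalent when $k\mid n$. The main obstacle is thus the concentration step in $(2)\Rightarrow(3)$, namely upgrading the lower bound on the multiplicity of one component to the statement that $(-K_X)^k$ is a genuine rational multiple of a single integral cycle; by contrast $(3)\Rightarrow(1)$ is essentially formal once one raises $(-K_X)^k=l[Z']$ to the power $n/k$ and uses that the resulting top self-intersection number is a positive integer.
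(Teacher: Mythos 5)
Your treatment of $(1)\Rightarrow(2)$ and of $(3)\Rightarrow(1)$ is correct and matches the paper: the first is the computation of Example \ref{expn} (your monomial witness $V(x_1^{n+1},\dots,x_k^{n+1})$ is an equally valid realization), and the second is exactly Corollary \ref{app}, with the integrality and positivity of the top self-intersection of $Z'$ made explicit. The problem is $(2)\Rightarrow(3)$, where you leave unproved precisely the two points that constitute the content of Proposition \ref{pop}. First, you localize at ``a codimension-$k$ component $W$ of $Z$ along which the threshold is computed,'' but nothing in the hypotheses guarantees that $\lct(X,I_Z)$ is attained at the generic point of a codimension-$k$ component rather than on a deeper stratum (a singular point of a component, or a locus where several components meet); and if $\lct_{\eta_W}(I_Z)>\tfrac{k}{r(n+1)}$, the inequality of \cite{DFEM}, namely $\lct^k\cdot e\ge k^k$, only gives $e(I_{Z,\eta_W})\ge (k/\lct_{\eta_W})^k<(r(n+1))^k$, which is too weak. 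Pinning down the codimension of the center of the minimizing divisor $E$ is exactly where K-semistability enters: the paper combines $\beta(E)\ge 0$ with the equality case of \eqref{hold} to force $\vol_Y(\sigma^\ast(-K_X)-xF)=\vol_Y(\sigma^\ast(-K_X)-xaE)$ for all $x$, then uses the finite generation results of \cite{blum2016divisors} to realize the valuation of $E$ on a blow-up of $X$ and compares the lowest-order nonconstant terms of the two volume polynomials \eqref{lhs} and \eqref{rhs}. Your sketch of this implication uses K-semistability nowhere, which is a symptom of the missing step.

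Second, even granting that the center of $E$ is a codimension-$k$ component of $Z$, statement (3) requires $[Z]$ to be proportional to a \emph{single} integral cycle, i.e.\ that $Z$ is irreducible. You flag this but propose to ``invoke the equality case'' of the multiplicity--lct inequality; there is no equality to invoke, since the argument only yields $e(I_{Z,\eta_W})\ge (r(n+1))^k$ and not equality, so one cannot conclude $\overline{I_{Z,\eta_W}}=\fm_{\eta_W}^{\,r(n+1)}$, and ``extremality of the configuration'' is not an argument. The paper instead matches the $x^k$-coefficients of the two volume polynomials to obtain \eqref{ded}, rewrites $(-1)^{k-1}\pi_\ast E_W^k$ as a multiple of $Z'$ via \eqref{ram}, and subtracts it from $[Z]=a'Z'+\sum_i a_i[Z_i]$; intersecting the resulting effective cycle with the ample class $(-rK_X)^{n-k}$ and finding the total to vanish kills all the remaining components at once, and the same identity \eqref{im} then yields $l\ge (n+1)^k$ after a single application of \cite{DFEM}. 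As written, your $(2)\Rightarrow(3)$ is a plausible opening move, not a proof.
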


\begin{remark}
Note that in the second statement of Theorem \ref{eq}, we say $\alpha^{(k)}(X)$ is realized by a complete intersection $Z$ if $Z$ is the complete intersection of $L_1\ldots,L_k\in |-rK_X|$, such that $\lct(X,\frac{1}{r}Z)=\alpha^{(k)}(X)$. In this case, the infimum in the definition of $\alpha^{(k)}(X)$ is in fact a minimum. When $\alpha^{(1)}(X)\leq 1$, Birkar shows in \cite{birkar2016singularities} that $\alpha^{(1)}(X)$ is realized by some $D\in |-K_X|_\Q$. In general for higher codimensional alpha invariants, it is not clear whether they can always be realized by some complete intersection. 
\end{remark}

When $k=1$, Theorem \ref{eq} reduces to the main result in \cite{sma} that characterizes projective spaces among all K-semistable Fano manifolds in terms of the usual alpha invariant. In \cite{sma}, Jiang proves that (2) implies (3) first. $(3) \Rightarrow (1)$ follows from the main result of \cite{Kobe}. Also note that in this case, due to Birkar's result, we do not need to assume $\alpha^{(1)}(X)$ is realized in the second statement of Theorem \ref{eq}.

When $k=n$, using the inequality in \cite{DFEM}, a quick proof of Theorem \ref{eq} is given in Section \ref{min} (See Corollary \ref{casen}). In this case, it it also not necessary to assume $\alpha^{(n)}(X)$ is realized. Note that $(3)\Rightarrow (1)$ when $k=n$ is implied by the following theorem:
\begin{theorem}[\cite{fujita2015optimal}]\label{fu}
Let $X$ be a K-semistable $\Q$-Fano variety of dimension $n$. Then we have $(-K_X)^n\leq (n+1)^n$. Moreover if $X$ is smooth and $(-K_X)^n= (n+1)^n$, then we know that $X\cong \PP^n$.
\end{theorem}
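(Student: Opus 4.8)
The final statement is Fujita's optimal volume bound, and the plan is to derive it from the valuative criterion for K-semistability (Fujita--Li) applied to one explicit valuation. Set $V=(-K_X)^n$. The criterion says that a $\Q$-Fano $X$ is K-semistable if and only if
\[
\beta(E):=A_X(E)\cdot V-\int_0^\infty\vol\big(\pi^*(-K_X)-tE\big)\,dt\ \ge\ 0
\]
for every prime divisor $E$ over $X$, where $A_X(E)$ is the log discrepancy. I would test this on the exceptional divisor $E$ of the blowup $\pi\colon\operatorname{Bl}_xX\to X$ at a smooth point $x$ (the smooth locus is dense since $X$ is klt, hence normal). For such $E$ one has $A_X(E)=n$.

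The inequality is then obtained from a lower bound on the integral. First I would check that $\vol(\pi^*(-K_X)-tE)\ge V-t^n$ for all $t\ge 0$: asymptotically this volume equals $\lim_m \tfrac{n!}{m^n}h^0\big(X,\,-mK_X\otimes\fm_x^{\lceil mt\rceil}\big)$, and imposing vanishing to order $\lceil mt\rceil$ at the smooth point $x$ cuts out at most $\binom{\lceil mt\rceil+n-1}{n}\sim (mt)^n/n!$ linear conditions, so at least $\tfrac{m^n}{n!}(V-t^n)+o(m^n)$ sections survive. Integrating,
\[
\int_0^\infty\vol\big(\pi^*(-K_X)-tE\big)\,dt\ \ge\ \int_0^{V^{1/n}}(V-t^n)\,dt\ =\ \frac{n}{n+1}\,V^{(n+1)/n}.
\]
Substituting into $\beta(E)\ge 0$ with $A_X(E)=n$ gives $nV\ge\frac{n}{n+1}V^{(n+1)/n}$, hence $V^{1/n}\le n+1$, i.e. $(-K_X)^n\le(n+1)^n$.

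For the rigidity I would trace back the equality case $V=(n+1)^n$. It forces $\beta(E)=0$ and equality in the volume estimate, so $\vol(\pi^*(-K_X)-tE)=V-t^n$ for every $t\in[0,n+1]$ and $\vol=0$ beyond; equivalently the vanishing conditions at $x$ remain independent up to order $n+1$, so the anticanonical Seshadri constant at $x$ attains its maximal value $n+1$, and this holds at every smooth point. The class $\pi^*(-K_X)-(n+1)E$ is then nef with $\big(\pi^*(-K_X)-(n+1)E\big)^n=V-(n+1)^n=0$, sitting on the boundary of the nef cone. I would use the extremal contraction (or the associated morphism) attached to this class to conclude that $-K_X$ has Fano index $n+1$, after which Kobayashi--Ochiai identifies the smooth Fano $X$ with $\PP^n$.

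I expect the rigidity to be the main obstacle. The inequality itself is soft, requiring only the crude count of vanishing conditions at a point together with the valuative criterion. Converting the numerical equality $(-K_X)^n=(n+1)^n$ into the geometric isomorphism $X\cong\PP^n$, however, needs genuine control of the linear system $|-K_X|$ and of the extremal ray cut out by $\pi^*(-K_X)-(n+1)E$, which is where the real work lies.
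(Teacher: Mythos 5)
First, a point of order: the paper does not prove this statement. Theorem \ref{fu} is quoted from \cite{fujita2015optimal} and used as a black box (in Corollary \ref{app} and Corollary \ref{casen}), so there is no internal proof to compare against. Judged on its own merits, your argument for the inequality $(-K_X)^n\le (n+1)^n$ is correct and is essentially Fujita's: testing the valuative criterion on the exceptional divisor of the blow-up at a smooth point, where $A_X(E)=n$ (equivalently, applying Theorem \ref{betaZ} to $Z=\{x\}$ with $\lct(X,\{x\})=n$), together with the elementary bound $\vol(\pi^\ast(-K_X)-tE)\ge \max(V-t^n,0)$ obtained by counting vanishing conditions, yields $nV\ge \frac{n}{n+1}V^{(n+1)/n}$ and hence the bound.

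The rigidity part, however, has a genuine gap. From the forced equality $\beta(E)=0$ you correctly extract $\vol(\pi^\ast(-K_X)-tE)=V-t^n$ for $t\in[0,n+1]$, but this only says that $\pi^\ast(-K_X)-tE$ is \emph{big} for $t<n+1$; the Seshadri constant is the \emph{nef} threshold, which is in general strictly smaller than the threshold of bigness. So the sentence ``the vanishing conditions remain independent up to order $n+1$, so the anticanonical Seshadri constant attains its maximal value $n+1$'' is a non sequitur; this is precisely the delicate step in Fujita's proof, and it is handled via the differentiability of the volume function and restricted volumes (using $\frac{d}{dt}\vol(\pi^\ast(-K_X)-tE)=-n\,\vol_{Y|E}(\pi^\ast(-K_X)-tE)$ and the equality case of the comparison with $((\pi^\ast(-K_X)-tE)|_E)^{n-1}=t^{n-1}$), not by a dimension count. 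Likewise, even granting nefness of $\pi^\ast(-K_X)-(n+1)E$, your proposed passage from the contraction of this boundary nef class to ``$-K_X$ has Fano index $n+1$'' is unsubstantiated: the standard route from $\epsilon(-K_X;x)\ge n+1$ to $X\cong\PP^n$ is the Seshadri-constant characterization of projective space (Bauer--Szemberg), which rests on Mori's bend-and-break producing a rational curve $C$ through $x$ with $-K_X\cdot C\le n+1$, whence $\epsilon(-K_X;x)\le(-K_X\cdot C)/\mult_xC\le n+1$ with equality forcing the Cho--Miyaoka--Shepherd-Barron/Kebekus criterion to apply --- not an extremal-contraction argument on the blow-up. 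As written, the equality case is not proved.
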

In fact we use Theorem \ref{fu} to prove the last part of Theorem \ref{eq} for any $k$ that divides $n$.
We would also like to comment that in both \cite{liu2016volume} and \cite{Liu2017}, Liu and Zhuang proved a stronger version of Theorem~\ref{fu} without assuming smoothness.

\subsection*{Acknowledgements}
The author would like to thank his advisor Tommaso de Fernex for guiding his research and providing insightful thoughts throughout the project. He would also like to thank Harold Blum, Yuchen Liu, Ziquan Zhuang and Ivan Cheltsov for effective discussions. 
\section{Preliminaries}

\subsection{Cycles, rational equivalence and numerical equivalence}
Let $X$ be a scheme and $Z$ a $k$-dimensional subscheme of $X$. Let $Z_1,\ldots,Z_t$ be the irreducible components of $Z$. Then following the notation of \cite{fulton2012intersection}, the k-cycle of a subscheme Z is $[Z]=\sum_{i=1}^t a_i[Z_i]$, where $a_i=l(\OS_{Z,Z_i})$ is the length of $\OS_{Z,Z_i}$ as an $\OS_{Z,Z_i}$-module.

For any two $k$-cycles $Z$ and $W$ on a proper scheme, we write $Z\sim_{rat} W$ if $Z$ is rationally equivalent to $W$; and write $Z\equiv_{num} W$ if $Z$ is numerically equivalent to $W$. We refer to \cite{fulton2012intersection} for definitions. 

In particular, we know that rational equivalence is the same as linear equivalence for divisors, and we write $D\sim_{lin}D'$ if $D$ is linear equivalent to $D'$. Also on any smooth proper scheme $X$ of dimension $n$, two $k$-cycles $Z$ and $W$ are numerically equivalent if and only if we have the equality of intersection numbers 
$
Z\cdot T=W\cdot T
$
for any $(n-k)$-cycle $T$.
\subsection{Multiplicity of ideals}
Let $X$ be a scheme of dimension $n$, and $Z\subset X$ a closed subscheme corresponding to the ideal sheaf $I_Z\subset \OS_X$. Let $V$ be an irreducible component of $Z$ with codimension $k$. Then the multiplicity of $X$ along $Z$ at the generic point of $V$ is defined as
$$
e(I_Z\cdot\OS_{X,V}):=\lim_{t\to \infty}\frac{l(\OS_{X,V}/I_Z^t\cdot\OS_{X,V})}{t^k/k!}.
$$
Suppose in addition $Z$ is irreducible. Let $\sigma:Y\to X$ be a proper birational morphism such that $\sigma^{-1}I_Z\cdot \OS_Y=\OS_Y(-E)$. Then we have the following equality of $(n-k)$-cycles:
\begin{equation}\label{ram}
e(I_Z\cdot\OS_{X,V})[V]=(-1)^{k-1}\sigma_\ast (E^{k}).
\end{equation}
Refer to \cite{fulton2012intersection} for more details about multiplicity.

For the multiplicity of an ideal corresponding to a complete intersection, we have the following property (See Example 4.3.5 of \cite{fulton2012intersection} for a proof):
\begin{proposition}\label{cim}
Let $(R,\fm)$ be a Noetherian local ring of dimension $k$ and $\aid=(x_1,\ldots,x_k)$ an $\fm$-primary ideal. If $R$ is Cohen-Macaulay, then $e(\aid)=l(R/\aid)$.
\end{proposition}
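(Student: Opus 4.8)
The plan is to compute the Hilbert--Samuel function of $\aid$ explicitly and read off its leading coefficient. First I would record the structural input: since $\aid$ is $\fm$-primary and $\dime R=k$, the generators $x_1,\ldots,x_k$ form a system of parameters, and because $R$ is Cohen--Macaulay, any system of parameters is automatically a regular sequence. This single fact is what drives the whole computation.

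The heart of the argument is the identification of the associated graded ring
$$
\mathrm{gr}_\aid(R)=\bigoplus_{n\geq 0}\aid^n/\aid^{n+1}.
$$
There is always a surjection of graded $(R/\aid)$-algebras
$$
(R/\aid)[T_1,\ldots,T_k]\twoheadrightarrow \mathrm{gr}_\aid(R),\qquad T_i\mapsto \overline{x_i}\in \aid/\aid^2,
$$
and the key claim is that when $x_1,\ldots,x_k$ is a regular sequence this map is an isomorphism. I would establish injectivity by induction on $k$: the precise assertion is that a homogeneous form $f(T_1,\ldots,T_k)$ of degree $n$ with $f(x_1,\ldots,x_k)\in \aid^{n+1}$ must have all coefficients in $\aid$, and the regular-sequence hypothesis (in particular that $x_k$ is a nonzerodivisor modulo $(x_1,\ldots,x_{k-1})$) is exactly what rules out any nontrivial such relation.

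Granting the isomorphism, each graded piece is a free $R/\aid$-module on the degree-$n$ monomials in $k$ variables, so
$$
l(\aid^n/\aid^{n+1})=l(R/\aid)\binom{n+k-1}{k-1}.
$$
Summing over $n$ and applying the hockey-stick identity gives
$$
l(R/\aid^{t})=l(R/\aid)\binom{t+k-1}{k},
$$
a polynomial in $t$ of degree $k$ whose leading term is $l(R/\aid)\,t^k/k!$. Substituting into the defining limit
$$
e(\aid)=\lim_{t\to\infty}\frac{l(R/\aid^{t})}{t^k/k!}
$$
yields $e(\aid)=l(R/\aid)$, as claimed.

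I expect the associated graded ring isomorphism to be the main obstacle: surjectivity is formal, but injectivity genuinely uses the regular-sequence hypothesis, and this is precisely where the Cohen--Macaulay assumption enters. An alternative that sidesteps the graded ring would be a direct induction on $k$ via the short exact sequences coming from multiplication by $x_k$, combined with the additivity of multiplicity under a nonzerodivisor; but the graded-ring route keeps the bookkeeping transparent and delivers the Hilbert function on the nose.
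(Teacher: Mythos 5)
Your proof is correct. Note that the paper does not actually prove this proposition---it simply cites Example 4.3.5 of Fulton's \emph{Intersection Theory}---so there is no in-paper argument to compare against; what you have written is the standard self-contained commutative-algebra proof. The chain of reductions is sound: $x_1,\ldots,x_k$ is a system of parameters since $\aid$ is $\fm$-primary in a $k$-dimensional ring, the Cohen--Macaulay hypothesis upgrades it to a regular sequence, and the isomorphism $(R/\aid)[T_1,\ldots,T_k]\cong \mathrm{gr}_\aid(R)$ (quasi-regularity of regular sequences, e.g.\ Matsumura Thm.~16.2 or Bruns--Herzog Thm.~1.1.8) gives $l(R/\aid^t)=l(R/\aid)\binom{t+k-1}{k}$, whose normalized leading coefficient is exactly $l(R/\aid)$. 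You correctly identify the injectivity of the surjection onto the associated graded ring as the one step that genuinely consumes the regular-sequence hypothesis; the inductive argument you sketch for it is the standard one and works. The alternative you mention at the end---induction on $k$ using additivity of multiplicity along the nonzerodivisor $x_k$---is closer in spirit to how intersection-theory texts such as Fulton's treat this, but either route is complete and there is no gap.
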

\subsection{Singularities and log canonical threshold}
Let $X$ be a normal variety such that $K_X$ is $\Q$-Cartier and $Z\subset X$ a closed subscheme. Denote by $I_Z$ the ideal sheaf defining $Z$. We say $E$ is a divisor over $X$ if $E$ is a prime divisor on some normal variety $Y$ with a proper birational morphism $\sigma:Y\to X$, and $\sigma(E)$ is called the center of $E$ on $X$. We say that $E$ is exceptional if the center of $E$ on $X$ has dimension smaller than $E$. Use $A_X(E)$ to denote the log discrepancy of $E$. For $a>0$, we say that the pair $(X,aZ)$ has klt (or log canonical) singularities if $A_X(E)-a\ord_E(I_Z)> 0$ (or $\geq 0$) for any prime divisor $E$ over $X$. Let $\HH$ be a linear system on $X$ and $Z$ be the base scheme $\bs\HH$. Singularities of the pair $(X,a\HH)$ are defined to be the same as the singularities of the pair $(X,aZ)$. In the definition of klt and log canonical singularities, it is enough to examine finitely many divisors on a fixed log resolution. We refer to \cite{kollar2008birational} and \cite{lazarsfeld2004positivity2} for more details.

Let $X$ have klt singularities and assume $Z$ is non-empty. For any nonnegative number $a$, the log canonical threshold of the formal power $I^a_Z$ is defined to be the log canonical threshold of the pair $(X,aZ)$, which is a nonnegative number computed by 
\begin{equation}\label{lct}
\lct(I^a_Z):=\lct(X,aZ)=\inf_E\frac{A_X(E)}{a\ord_E(I_Z)},
\end{equation}
where the infimum runs through all prime divisors $E$ over $X$. Note that immediately from the definition, the log canonical threshold of $(X,aZ)$ can also be viewed as the largest number $t>0$ such that $(X,taZ)$ is log canonical. Then by fixing a log resolution of $(X,Z)$, we see that the infimum in \eqref{lct} is in fact a minimum running through finitely many divisors. In particular, log canonical thresholds of pairs are rational numbers. Also from the definition, for $a>0$, we have~$\lct(X,aZ):=\frac{1}{a}\lct(X,Z)$.   
\subsection{K-stability}
The $\beta$-invariant of a divisor over a $\Q$-Fano variety is defined in the following way:
\begin{definition}[\cite{fujita2016valuative}]
Let $X$ be a $\Q$-Fano variety of dimension $n$, and $F$ a prime divisor over $X$. Suppose $\sigma:Y\to X$ is a projective birational morphism such that $F$ is a divisor on $Y$. Then we define the $\beta$-invariant of $F$ to be
$$
\beta(F):=A_X(F)\vol_X(-K_X)-\int_0^\infty \vol_Y(\sigma^\ast(-K_X)-xF)\,dx.
$$
\end{definition}

Instead of recalling the original definition of K-stability via normal test configurations and Donaldson-Futaki invariants, we use a valuative criterion of K-semistability in terms of $\beta$-invariants developed by Fujita and Li.
\begin{theorem}[\cite{fujita2016valuative,li2015k}]\label{beta}
Let $X$ be a $\Q$-Fano variety. Then $X$ is K-semistable if and only if $\beta(F)\geq 0$ for any divisor $F$ over $X$.
\end{theorem}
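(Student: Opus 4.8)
The plan is to prove the two implications separately, in each case routing through the intersection-theoretic formula for the Donaldson--Futaki invariant and the dictionary between test configurations and $\Z$-filtrations of the anticanonical section ring $R=\bigoplus_{m\geq 0}H^0(X,-mrK_X)$, where $r$ is chosen so that $-rK_X$ is Cartier. Throughout I would complete a test configuration over $\A^1$ to one over $\PP^1$ so that the Wang--Odaka formula expresses $\DF$ as a difference of intersection numbers on the total space.

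For the necessity direction, K-semistable $\Rightarrow\beta(F)\geq 0$, I would first attach to every prime divisor $F$ over $X$ the filtration $\F^jR=\{s:\ord_F(s)\geq j\}$. When the associated (extended) Rees algebra is finitely generated --- the ``dreamy'' case --- its $\proj$ over $\A^1$ is a normal test configuration $(\X_F,\LL_F)$ with a $\G_m$-action. Comparing the Wang--Odaka intersection formula for $\DF(\X_F,\LL_F)$ on the total space with the definition $\beta(F)=A_X(F)\vol_X(-K_X)-\int_0^\infty\vol_Y(\sigma^*(-K_X)-xF)\,dx$, a direct computation identifies $\DF(\X_F,\LL_F)$ with a positive multiple of $\beta(F)$; the anticanonical polarization is exactly what makes the volume integral reappear. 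K-semistability then forces $\beta(F)\geq 0$ for all dreamy $F$. To remove the finite-generation hypothesis I would approximate: both $A_X(F)$ and the volume integral vary continuously over the quasimonomial valuations adapted to a fixed log resolution, dreamy divisors are dense among these, and passing to the limit preserves $\beta(F)\geq 0$. Equivalently, one invokes Fujita's approximation of a filtration by finitely generated ones to realize $\beta(F)$ as a limit of Donaldson--Futaki invariants of genuine test configurations.

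For the sufficiency direction, $\beta\geq 0\Rightarrow$ K-semistable, I would start from an arbitrary normal test configuration $(\X,\LL)$, read off the filtration $\F^\bullet R$ induced by its central fiber, and rewrite $\DF(\X,\LL)$ in terms of the Duistermaat--Heckman measure of this filtration. The crux is the inequality $\DF(\X,\LL)\geq\min_i\beta(v_i)$, where the $v_i$ are the divisorial valuations attached to the finitely many irreducible components of $\X_0$. To make this transparent I would first reduce to special test configurations, whose central fiber is an integral $\Q$-Fano, using the MMP-based reduction of Li--Xu; for such configurations the induced valuation is a single divisorial $\ord_F$ and $\DF$ equals $\beta(F)/(-K_X)^n$ on the nose. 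Since $\beta\geq 0$ for all divisors over $X$ by hypothesis, $\DF\geq 0$ on this cofinal family of test configurations, hence on all of them, which is precisely K-semistability.

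The main obstacle is this sufficiency direction, namely converting the Donaldson--Futaki invariant of an \emph{uncontrolled} test configuration into the $\beta$-invariant of an honest divisor over $X$. One cannot avoid either the Li--Xu reduction to special test configurations --- guaranteeing an integral central fiber and a genuinely divisorial induced valuation --- or, in an MMP-free approach, a delicate analysis of the Duistermaat--Heckman measure together with Fujita-style approximation to control the error between $\DF$ and $\beta$. By contrast, the $\DF$-to-$\beta$ comparison in the necessity direction, though it requires careful bookkeeping with intersection numbers on the total space, is essentially formal once the Rees construction is available.
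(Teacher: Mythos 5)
The paper does not actually prove Theorem \ref{beta}: it is imported as a black box from \cite{fujita2016valuative,li2015k}, so your proposal has to be measured against those original proofs rather than against anything in this text. Your sufficiency direction is essentially Fujita's argument and is sound in outline: reduce to special test configurations via the Li--Xu MMP, along which $\DF$ only decreases up to a positive multiple; observe that the central fiber of a nontrivial special test configuration induces a divisorial valuation $\ord_F$ on $X$; and verify that $\DF$ of the special test configuration is a positive multiple of $\beta(F)$ (your phrase ``equals $\beta(F)/(-K_X)^n$ on the nose'' is right up to normalization, and you should separately dispose of the degenerate case where the induced valuation is trivial).

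The genuine gap is in your necessity direction. For a non-dreamy $F$ you propose to realize $\beta(F)$ as a limit of Donaldson--Futaki invariants of truncation (flag-ideal) test configurations, but that limit is exactly the hard point: by Boucksom--Chen the Duistermaat--Heckman/energy parts converge, while the entropy part of $\DF$ (the intersection with the relative canonical of the total space) is only bounded below by the log-discrepancy datum you want --- one has Berman's inequality $\Ding\leq\DF$ and no matching upper bound --- so from $\DF\geq 0$ on the approximating test configurations you cannot conclude $\beta(F)\geq 0$; the error goes the wrong way. This is precisely why Fujita's actual proof routes through the Ding invariant: K-semistability is first upgraded to Ding semistability (Berman's inequality combined with the Li--Xu reduction, since $\Ding=\DF$ on special test configurations, a mechanism already used in \cite{fujita2015optimal}), and the Ding invariant of the semi-test-configurations built from the ideals $\aid_m=\sigma_\ast\OS_Y(-mF)$ has a log-canonical-threshold term that genuinely converges to the $A_X(F)$-part of $\beta(F)$, which the $K$-intersection term of $\DF$ does not. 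Your fallback --- density of dreamy divisors among quasimonomial valuations plus continuity of $\beta$ --- does not repair this: $A_X$ and the volume term are indeed continuous on quasimonomial valuations, but dreaminess is a strong finite-generation condition and no density theorem for dreamy valuations (in a topology making $\beta$ continuous) was available, so this step is an unsupported assertion rather than a proof. Finally, \cite{li2015k} is not a variant of this scheme at all: Li derives the criterion by showing K-semistability is equivalent to the canonical valuation minimizing the normalized volume over the cone and reading $\beta(F)$ as a directional derivative, so citing the two papers as one approach conflates two genuinely different routes.
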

We can also define $\beta$-invariants for proper closed subschemes of $X$ in a similar way.
\begin{definition}[\cite{fujita2015optimal}]
Let $X$ be a $\Q$-Fano variety of dimension $n$ and $Z\subset X$ a subscheme of $X$ defined by the ideal sheaf $I_Z\subset \OS_X$. Take a projective birational morphism $\sigma:Y\to X$ that factors through the blow-up of $X$ along $Z$, and write $\sigma^{-1}I_Z\cdot \OS_Y=\OS_Y(-F)$. Then we define the $\beta$-invariant of $Z$ to be
$$
\beta(Z):=\lct(X,Z)\vol_X(-K_X)-\int_0^\infty \vol_Y(\sigma^\ast(-K_X)-xF)\,dx.
$$
\end{definition}
Note that $\beta(F)$ and $\beta(Z)$ do not depend on the choice of the birational morphism~$\sigma$. An immediate consequence of Theorem \ref{beta} is the following result.
\begin{theorem}[\cite{fujita2015optimal}]\label{betaZ}
Let $X$ be a $\Q$-Fano variety. If $X$ is K-semistable, then $\beta(Z)\geq 0$ for any proper closed subscheme $Z\subsetneq X$.
\end{theorem}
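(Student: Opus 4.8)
The plan is to reduce the subscheme statement to the divisorial criterion of Theorem~\ref{beta}. The key input, recorded just after \eqref{lct}, is that $\lct(X,Z)$ is computed by an honest prime divisor over $X$: fixing a log resolution of $(X,Z)$ turns the infimum in \eqref{lct} into a minimum, so there is a prime divisor $E$ over $X$ with $\lct(X,Z)=A_X(E)/\ord_E(I_Z)$. Since $Z$ is a nonempty proper closed subscheme and $X$ is klt, $\lct(X,Z)$ is finite and positive, so $c:=\ord_E(I_Z)$ is a positive integer. The whole proof then consists of comparing $\beta(Z)$ with $\beta(E)$ and invoking K-semistability.

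First I would arrange a single birational model to carry both objects. Choose $\sigma:Y\to X$ to be a log resolution of $(X,Z)$ that also extracts $E$; such a $Y$ factors through the blow-up of $X$ along $Z$, so it is admissible in the definition of $\beta(Z)$, and on it $\sigma^{-1}I_Z\cdot\OS_Y=\OS_Y(-F)$ with $F=\sum_i \ord_{E_i}(I_Z)\,E_i$ an effective divisor whose components $E_i$ are prime divisors on $Y$. As $E$ occurs among the $E_i$ with coefficient exactly $c=\ord_E(I_Z)$, the divisor $F-cE$ is effective.

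The heart of the matter is then a comparison of the two volume integrals. For every $x\geq 0$ the divisor $x(F-cE)$ is effective, so adding it only increases volume; hence $\vol_Y(\sigma^\ast(-K_X)-xF)\leq \vol_Y(\sigma^\ast(-K_X)-cxE)$. Integrating over $[0,\infty)$ and substituting $y=cx$ gives
\[
\int_0^\infty \vol_Y(\sigma^\ast(-K_X)-xF)\,dx \ \leq\ \frac{1}{c}\int_0^\infty \vol_Y(\sigma^\ast(-K_X)-yE)\,dy .
\]
Substituting this bound into the definition of $\beta(Z)$ and rewriting $\lct(X,Z)\vol_X(-K_X)=\tfrac1c A_X(E)\vol_X(-K_X)$, I obtain $\beta(Z)\geq \tfrac1c\,\beta(E)$. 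Because $X$ is K-semistable, Theorem~\ref{beta} yields $\beta(E)\geq 0$, and since $c>0$ this gives $\beta(Z)\geq 0$, as required.

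The argument is short, and the only genuinely delicate point is the bookkeeping around the model: I must ensure that one fixed $Y$ simultaneously resolves $(X,Z)$ (so that $F$ and $\beta(Z)$ are defined and the lct is attained) and carries the lct-computing divisor $E$ as a component of $F$, and that the value $\beta(E)$ computed on this particular $Y$ coincides with the model-independent $\beta(E)$ appearing in Theorem~\ref{beta}; the latter is exactly the independence of $\beta$ from $\sigma$ noted above. The remaining ingredients---monotonicity of the volume of $\R$-divisors under addition of an effective divisor, and the eventual vanishing of $\vol_Y(\sigma^\ast(-K_X)-xF)$ for $x\gg 0$ (so that the integrals converge)---are standard facts about volumes on the normal projective variety $Y$.
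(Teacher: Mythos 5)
Your proof is correct and is precisely the deduction the paper has in mind when it labels Theorem~\ref{betaZ} an ``immediate consequence'' of Theorem~\ref{beta} (the paper gives no further argument, deferring to \cite{fujita2015optimal}, where the same reduction appears). Comparing $\beta(Z)$ with $\beta(E)$ for an lct-computing divisor $E$ via $F\geq cE$, the monotonicity of volumes, and the substitution $y=cx$ to get $\beta(Z)\geq \tfrac{1}{c}\beta(E)\geq 0$ is exactly the intended route, and your bookkeeping (one model $Y$ resolving $(X,Z)$ and carrying $E$, plus model-independence of $\beta$) is handled correctly.
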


\subsection{Characterizations of Projective Spaces}
We recall some results about characterization of projective spaces among Fano manifolds. They are all in some sense related to the divisibility of the canonical divisor.
\begin{theorem}[\cite{Kobe}]\label{kob}
Let $X$ be a smooth Fano manifold of dimension $n$ and $H$ an ample divisor on $X$. If $-K_X\sim_{lin} lH$ with $l\geq n+1$, then $X\simeq \PP^n$, $l=n+1$, and $\OS_X(H)=\OS_{\PP^n}(1)$.
\end{theorem}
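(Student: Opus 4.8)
The plan is to deduce the statement from the existence of low-degree rational curves on Fano manifolds, combined with a characterization of $\PP^n$ by its minimal rational curves. First I would bound the coefficient $l$. Since $X$ is a Fano manifold it is uniruled, and by Mori's bend-and-break there exists a rational curve $C\subset X$ with $0<-K_X\cdot C\leq n+1$. Because $H$ is an ample Cartier divisor and $C$ is an irreducible curve, the intersection number $H\cdot C$ is a positive integer, so $H\cdot C\geq 1$. From $-K_X\sim_{lin}lH$ we then obtain
$$
l\leq l\,(H\cdot C)=-K_X\cdot C\leq n+1.
$$
Together with the hypothesis $l\geq n+1$, this forces $l=n+1$, $H\cdot C=1$, and $-K_X\cdot C=n+1$.

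Next, with $l=n+1$ in hand, I would note that every curve $C'$ on $X$ satisfies $-K_X\cdot C'=(n+1)(H\cdot C')\geq n+1$, again because $H$ is ample; in particular every rational curve on $X$ has anticanonical degree at least $n+1$. I would then apply the Cho--Miyaoka--Shepherd-Barron characterization of projective space: a Fano manifold of dimension $n$ all of whose rational curves have anticanonical degree $\geq n+1$ is isomorphic to $\PP^n$. This yields $X\cong\PP^n$. Under this isomorphism $\Pic(X)$ is freely generated by $\OS_{\PP^n}(1)$ and $-K_X=\OS_{\PP^n}(n+1)$, so the relation $-K_X\sim_{lin}(n+1)H$ becomes $(n+1)H\sim_{lin}(n+1)\OS_{\PP^n}(1)$; since $\Pic(\PP^n)$ is torsion-free, this forces $\OS_X(H)=\OS_{\PP^n}(1)$.

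I expect the main obstacle to be the equality case, that is, upgrading the purely numerical statement ``every rational curve has anticanonical degree $\geq n+1$'' to the biregular conclusion $X\cong\PP^n$. The degree bound in the first step is elementary once Mori's existence of rational curves is granted, but the rigidity required to recognize $X$ as $\PP^n$ is precisely the deep content of the Cho--Miyaoka--Shepherd-Barron theorem, which rests on the deformation theory of rational curves and on Mori's analysis of extremal contractions. An alternative, closer in spirit to the original argument of \cite{Kobe}, would instead use Kodaira--Nakano vanishing to control the cohomology of $\OS_X(H)$ and produce enough global sections to construct the isomorphism directly; but the Mori-theoretic route above seems the most transparent.
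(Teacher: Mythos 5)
Your argument is correct, but note that the paper does not prove this statement at all: it is quoted as the classical Kobayashi--Ochiai theorem from \cite{Kobe}, so the only meaningful comparison is with the original proof in that reference. Your numerical step is fine: Mori's bend-and-break produces a rational curve $C$ with $0<-K_X\cdot C\leq n+1$, and since $H$ is ample and Cartier, $H\cdot C$ is a positive integer, forcing $l=n+1$ and $H\cdot C=1$; the final identification of $\OS_X(H)$ with $\OS_{\PP^n}(1)$ via torsion-freeness of $\Pic(\PP^n)$ is also correct. The genuine difference is in the middle step. Kobayashi and Ochiai's 1973 argument predates Mori theory entirely and is cohomological: Riemann--Roch and Kodaira vanishing give $h^0(X,H)\geq n+1$ and $H^n=1$, and an induction on dimension through general members of $|H|$ shows that $\phi_{|H|}$ is a degree-one finite morphism onto $\PP^n$, hence an isomorphism. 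You instead invoke the Cho--Miyaoka--Shepherd-Barron characterization (every rational curve through a general point has anticanonical degree $\geq n+1$ implies $X\cong\PP^n$). This is logically valid and gives a very short derivation, but it is a substantial inversion of difficulty: CMSB is a far deeper theorem (its original proof required repair by Kebekus), whereas Kobayashi--Ochiai is comparatively elementary. One caveat worth checking if you take this route: several expositions of CMSB-type results conclude by establishing that $-K_X$ is divisible by $n+1$ in $\Pic(X)$ and then \emph{citing Kobayashi--Ochiai}, which would make your argument circular; you should rely on a proof of CMSB (such as Kebekus's, which identifies $X$ with $\PP^n$ via the variety of minimal rational tangents and Mori's contraction argument) that does not pass through the statement being proved.
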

\begin{remark}\label{ntol}
Note that in the theorem it is enough to assume $-K_X\equiv_{num} lH$. Indeed, if $K_X+lH$ is numerically trivial, then we know that $\chi(K_X+lH)=\chi(\OS_X)=1$. By Kodaira vanishing, we have $h^i(X,K_X+lH)=0$ for $i>0$. Therefore $h^0(X,K_X+lH)=1$, which implies that $K_X+lH\sim_{lin} 0$.
\end{remark}

If we only consider K-semistable Fano manifolds of dimension $n$, we can characterize $\PP^n$ by the volume $\vol_X(-K_X)=(-K_X)^n$ instead, which is exactly the second part of Theorem \ref{fu}. Note that the condition $(-K_X)^n=(n+1)^n$ in Theorem \ref{fu} can be viewed as the divisibility of the $0$-cycle $(-K_X)^n$ by $(n+1)^n$, comparing to the divisibility of the divisor $-K_X$ by~$n+1$ in Theorem \ref{kob}. 

By considering the divisibility of cycles with intermediate codimension, we have the following immediate corollary of Theorem \ref{fu}:
\begin{corollary}\label{app}
Let $X$ be a K-semistable Fano manifold of dimension $n$. Suppose $k$ divides $n$. If $(-K_X)^k\equiv_{num}lZ$ for some rational number $l\geq (n+1)^k$ and $Z$ an integral $(n-k)$-cycle, then $X\simeq\PP^n$.
\end{corollary}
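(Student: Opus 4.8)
The plan is to reduce the statement to the volume characterization in Theorem~\ref{fu} by computing $(-K_X)^n$ in terms of the given cycle $Z$. Since $k$ divides $n$, write $n=km$. Because $X$ is a smooth projective variety, numerical equivalence is compatible with the intersection product, so the numerical classes of algebraic cycles form a graded ring in which I may raise the given relation to a power. Raising $(-K_X)^k\equiv_{num}lZ$ to the $m$-th power yields
$$
(-K_X)^n=\big((-K_X)^k\big)^m\equiv_{num}(lZ)^m=l^m\,(Z^m),
$$
where $Z^m$ denotes the $m$-fold self-intersection number of $Z$, obtained by associativity of the intersection product and $(-K_X)^{km}=(-K_X)^n$.

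The key point is then to control $Z^m$. As the $m$-fold product of an integral cycle on a smooth complete variety, $Z^m$ is the degree of a $0$-cycle class, hence an integer. Moreover, since $-K_X$ is ample we have $(-K_X)^n>0$, and $l^m>0$, so $Z^m=(-K_X)^n/l^m>0$; being a positive integer it satisfies $Z^m\geq 1$. Combined with the hypothesis $l\geq(n+1)^k$, this gives
$$
(-K_X)^n=l^m\,(Z^m)\geq (n+1)^{km}=(n+1)^n.
$$

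Finally I would invoke Theorem~\ref{fu}: since $X$ is K-semistable we also have $(-K_X)^n\leq(n+1)^n$, so the two inequalities force $(-K_X)^n=(n+1)^n$. The smoothness of $X$ then allows the equality clause of Theorem~\ref{fu} to conclude $X\simeq\PP^n$. I expect the only genuinely delicate step to be the justification that $Z^m$ is a positive integer, namely that the self-intersection is computed in the integral intersection ring of the smooth variety $X$ and that its positivity follows from the ampleness of $-K_X$; everything else is a formal manipulation of numerical classes followed by a direct appeal to Theorem~\ref{fu}.
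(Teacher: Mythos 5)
Your proposal is correct and follows essentially the same route as the paper: raise $(-K_X)^k\equiv_{num}lZ$ to the power $n/k$, deduce $(-K_X)^n\geq(n+1)^n$, and conclude via Theorem~\ref{fu}. The only difference is that you explicitly justify the step $Z^{n/k}\geq 1$ (integrality plus positivity of the self-intersection number), which the paper uses implicitly in the inequality $(lZ)^{n/k}\geq l^{n/k}$.
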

\begin{proof}
because $k$ divides $n$, we can write
$$
(-K_X)^n=\left((-K_X)^k)\right)^{\frac{n}{k}}=(lZ)^{\frac{n}{k}}\geq l^{\frac{n}{k}}\geq (n+1)^n.
$$
Since $X$ is K-semistable, we know from Theorem \ref{fu} that
$
(-K_X)^n= (n+1)^n
$
 and $X\simeq \PP^n$.
\end{proof}
For any dimension $n$, we at least know that $k$ divides $n$ when $k=1$ or $k=n$. These are the two cases discussed in Theorem \ref{kob} and Theorem \ref{fu} respectively. Along with some mild assumptions if necessary, we expect that the divisibility condition of the cycle $(-K_X)^k$ described in Corollary \ref{app} can characterize projective spaces among all K-semistable Fano manifolds. More precisely, we would like to ask the following question:
\begin{question}\label{con}
Let $X$ be a K-semistable Fano manifold of dimension $n$. If $(-K_X)^k\equiv_{num}lZ$ for some rational number $l\geq (n+1)^k$ and $Z$ an integral $(n-k)$-cycle, then is $X$ isomorphic to the projective space $\PP^n$?
\end{question}
Corollary \ref{app} answers Question \ref{con} when $k$ divides $n$. If the answer to Question \ref{con} is yes in general, then the three statements in Theorem \ref{eq} will be equivalent for all codimension $k$.
\section{Higher Codimensional Alpha Invariants}\label{min}
In this section, we will discuss some examples and properties of higher codimensional alpha invariants. We first recall the definition of higher codimensional alpha invariants.
\begin{definition}
Let $X$ be a $\Q$-Fano variety of dimension $n$. For $1\leq k\leq n$, denote by $\ci(L_1,\ldots,L_k)$ the complete intersection in $X$ cut out by effective Cartier divisors $L_1,\ldots,L_k$. Then we define the alpha invariant of codimension $k$ for $X$ to be
$$
\alpha^{(k)}(X):=\inf_{r}\left\{\lct\left(X,\frac{1}{r}Z\right)\bigg|Z=\ci(L_1,\ldots,L_k),~L_1,\ldots,L_k\in |-rK_X|\right\}.
$$
\end{definition}
\begin{remark}
Note that it follows immediately from the definition that $\alpha^{(k)}(X)$ can also be computed in terms of linear systems: 
$$
\alpha^{(k)}(X):=\inf_{r}\left\{\lct\left(X,\frac{1}{r}\HH\right)\bigg|\HH \subset |-rK_X|,~\codim \bs\HH\geq k\right\}.
$$
\end{remark}

A first basic property of alpha invariants is that all $\alpha^{(k)}(X)$'s form an increasing sequence in terms of the codimension $k$: $\alpha^{(1)}(X)\leq \alpha^{(2)}(X)\leq \cdots\leq \alpha^{(n)}(X)$. This follows immediately from the definition of alpha invariant and log canonical threshold.

The following proposition gives a lower bound for the top codimensional alpha invariant $\alpha^{(n)}(X)$.
\begin{proposition}[Proposition 2.4, \cite{zhuang2019birational}]\label{DF}
Let $X$ be a Fano manifold of dimension $n$. Then 
$
\alpha^{(n)}(X)\geq \frac{n}{\sqrt[n]{(-K_X)^n}}.
$
\end{proposition}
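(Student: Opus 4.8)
The plan is to reduce this global statement to a purely local estimate at the finitely many points of the zero-dimensional scheme $Z=\ci(L_1,\ldots,L_n)$, and then to invoke the multiplicity--log-canonical-threshold inequality of de Fernex, Ein and Musta\c{t}\u{a} \cite{DFEM}. Recall that for an $\fm$-primary ideal $\aid$ in a regular local ring of dimension $n$, that inequality reads $\lct(\aid)\geq n/e(\aid)^{1/n}$. Since $L_1,\ldots,L_n\in|-rK_X|$ are $n$ divisors on the $n$-fold $X$ cutting out a scheme of the expected codimension $n$, the scheme $Z$ is zero-dimensional, so its log canonical threshold is governed entirely by the local behavior of $I_Z$ at the points of its support; concretely $\lct(X,Z)=\min_{p\in \Supp Z}\lct(I_{Z,p})$, because any divisor $E$ over $X$ with $\ord_E(I_Z)>0$ has center inside the finite set $\Supp Z$.

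Next I would carry out the local multiplicity bound. Fix a point $p\in\Supp Z$. Locally at $p$ the ideal $I_{Z,p}\subset\OS_{X,p}$ is generated by the $n$ local equations of $L_1,\ldots,L_n$ and is $\fm_p$-primary because $Z$ is zero-dimensional. Being a complete intersection of the expected dimension, $\OS_{X,p}/I_{Z,p}$ is Cohen--Macaulay, so Proposition \ref{cim} applies and gives $e(I_{Z,p})=l(\OS_{X,p}/I_{Z,p})$. Summing these local lengths over the support recovers the degree of the zero-cycle, namely $\sum_{p}l(\OS_{X,p}/I_{Z,p})=(L_1\cdots L_n)=r^n(-K_X)^n$ by B\'ezout. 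In particular every individual multiplicity is bounded by the total degree, $e(I_{Z,p})\leq r^n(-K_X)^n$.

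Finally I would assemble the pieces. Combining the DFEM inequality with the multiplicity bound gives, at each $p$,
$$
\lct(I_{Z,p})\;\geq\;\frac{n}{e(I_{Z,p})^{1/n}}\;\geq\;\frac{n}{r\,\sqrt[n]{(-K_X)^n}},
$$
and hence the same lower bound for $\lct(X,Z)$ after taking the minimum over $p$. Using the homogeneity $\lct(X,\tfrac1r Z)=r\,\lct(X,Z)$, the factor $r$ cancels and we obtain $\lct(X,\tfrac1r Z)\geq n/\sqrt[n]{(-K_X)^n}$ for every $r$ and every such $Z$; taking the infimum yields the claim. The conceptual heart of the argument is the DFEM inequality, which converts a multiplicity bound into a threshold bound; the only points requiring care are that a complete intersection of the correct dimension is Cohen--Macaulay (so that Proposition \ref{cim} turns the multiplicity into a length) and that each local multiplicity is controlled by the global degree $r^n(-K_X)^n$. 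Smoothness of $X$ is used precisely to guarantee that each $\OS_{X,p}$ is regular, as required in the hypothesis of the DFEM inequality.
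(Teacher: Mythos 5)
Your argument is correct: localizing at the finitely many points of the zero-dimensional complete intersection, identifying each local multiplicity with a length via Proposition \ref{cim}, bounding it by the total degree $r^n(-K_X)^n$, and applying the de Fernex--Ein--Musta\c{t}\u{a} inequality $\lct(\aid)\ge n/e(\aid)^{1/n}$ gives exactly the stated bound after the homogeneity rescaling. The paper itself only cites this proposition from \cite{zhuang2019birational} without proof, but it explicitly attributes the $k=n$ case to ``the inequality in \cite{DFEM}'', so your route is the intended one.
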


Combining Proposition \ref{DF} and Theorem \ref{fu}, we immediately have the following result which is a special case of Theorem \ref{eq} when $k=n$.
\begin{corollary}\label{casen}
Let $X$ be a K-semistable Fano manifold of dimension $n$. If $\alpha^{(n)}(X)\leq \frac{n}{n+1}$, then $X\cong \PP^n$. 
\end{corollary}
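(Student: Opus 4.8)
The plan is to sandwich the anticanonical volume $(-K_X)^n$ between two bounds coming from the two cited results and then to invoke the rigidity clause of Theorem \ref{fu}. Since $X$ is a Fano manifold, Proposition \ref{DF} applies and furnishes the lower bound $\alpha^{(n)}(X)\geq n/\sqrt[n]{(-K_X)^n}$. Combining this with the standing hypothesis $\alpha^{(n)}(X)\leq n/(n+1)$, I would first record the chain
$$
\frac{n}{\sqrt[n]{(-K_X)^n}}\leq \alpha^{(n)}(X)\leq \frac{n}{n+1}.
$$

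From here the argument is purely a rearrangement followed by an application of K-semistability. Cancelling the positive factor $n$ and inverting the outer inequality gives $\sqrt[n]{(-K_X)^n}\geq n+1$, and raising to the $n$-th power yields $(-K_X)^n\geq (n+1)^n$. On the other hand, because $X$ is K-semistable, the first part of Theorem \ref{fu} supplies the reverse inequality $(-K_X)^n\leq (n+1)^n$. The two bounds together force the equality $(-K_X)^n=(n+1)^n$. Since $X$ is smooth by assumption and the volume attains the extremal value $(n+1)^n$, the rigidity (second) part of Theorem \ref{fu} then immediately gives $X\cong \PP^n$.

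I do not anticipate any genuine obstacle in this argument, as it is a direct concatenation of Proposition \ref{DF} and Theorem \ref{fu}; the hypothesis $\alpha^{(n)}(X)\leq n/(n+1)$ is precisely what is needed to feed the volume past the threshold $(n+1)^n$ so that the equality case of Theorem \ref{fu} becomes available. The only bookkeeping point worth keeping in mind is that both the volume lower bound in Proposition \ref{DF} and the rigidity statement in Theorem \ref{fu} are stated for smooth $X$, which is guaranteed here since $X$ is assumed to be a Fano manifold.
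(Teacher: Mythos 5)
Your proposal is correct and is exactly the argument the paper intends: the paper's proof consists of the single sentence ``Combining Proposition \ref{DF} and Theorem \ref{fu}, we immediately have the following result,'' and your chain $n/\sqrt[n]{(-K_X)^n}\leq\alpha^{(n)}(X)\leq n/(n+1)$, hence $(-K_X)^n\geq(n+1)^n$, followed by the equality/rigidity clause of Theorem \ref{fu}, is precisely that combination spelled out.
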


\begin{example}
Let $X$ be a del Pezzo surface of degree 1, then $\alpha^{(2)}(X)=2$. Indeed, by Proposition \ref{DF}, we have $\alpha^{(2)}(X)\geq 2$. By computing the log canonical threshold of a general pencil inside $|-K_X|$, we see that $\alpha^{(2)}(X)= 2$.
\end{example}

\begin{example}\label{expn}
Let $X$ be a Fano manifold with Fano index $l$. Assume that $-K_X\sim_{lin}lD$ such that the linear system $|D|$ is base point free. Then we know that $\alpha^{(k)}(X)\leq k/l$. Indeed we can take $k$ sufficiently general smooth elements $L_1,\ldots,L_k\in |D|$ such that $Z$ is the transversal intersection of $L_1,\ldots,L_k$. Then $\lct(X,Z)=k$, and therefore $\alpha^{(k)}(X)\leq \lct(X,lZ)=k/l$. This gives an upper bound of higher codimensional alpha invariants for smooth Fano hypersurfaces in the projective spaces. In particular, for the projective space $\PP^n$, we know that $\alpha^{(k)}(\PP^n)\leq k/(n+1)$. On the other hand, because $\PP^n$ is K-semistable, we know from Theorem \ref{ineq} that $\alpha^{(k)}(\PP^n)\geq k/(n+1)$. Consequently we have $\alpha^{(k)}(\PP^n)= k/(n+1)$, and it is realized by any linear subspaces of codimension $k$.
\end{example}
\begin{example}
Cheltsov computed in \cite{Cheltsov2008} that $\alpha^{(1)}(\PP^1\times \PP^1)=1/2$. However, even in simple examples, it seems hard to compute $\alpha^{(k)}$ when $k\geq 2$. For $\PP^1\times \PP^1$, we only know that $2/3<\alpha^{(2)}(\PP^1\times \PP^1)\leq 3/4$. Indeed, we first notice that $-K_{\PP^1\times \PP^1}$ is of type $(2,2)$. Let $L_1$ and $L_2$ be two lines of type $(1,0)$ and type $(0,1)$ respectively that are symmetric with respect to each other, and $\Delta$ be the diagonal in $\PP^1\times \PP^1$. Then $L_1+L_2$ and $\Delta$ are both linearly equivalent to $-\frac{1}{2}K_{\PP^1\times \PP^1}$. Let $Z$ be the complete intersection of  $L_1+L_2$ and $\Delta$, and we have $\lct(X,Z)=3/2$. Therefore $\alpha^{(2)}(\PP^1\times \PP^1)\leq \lct(X,2Z)=3/4$. We also know that $\PP^1\times \PP^1$ is K-semistable (for example refer to \cite{Park2017}). Then by Theorem \ref{ineq} and Theorem \ref{eq}, we have $\alpha^{(2)}(\PP^1\times \PP^1)>2/3$.
\end{example}

Recent results of \cite{stibitz2018k,zhuang2018birational} can be reinterpreted from the point of view of higher codimensional alpha invariants in the following way.
\begin{theorem}\label{fsz}
Let $X$ be a $\Q$-factorial $\Q$-Fano variety of Picard number 1 and dimension $n$. If 
$$
\alpha^{(2)}(X)> \frac{n-1}{n+1}
$$
and 
$$
\alpha^{(1)}(X)\geq \frac{\alpha^{(2)}(X)}{(n+1)\alpha^{(2)}(X)-n+1}~\left(resp.~\alpha^{(1)}(X)>\frac{\alpha^{(2)}(X)}{(n+1)\alpha^{(2)}(X)-n+1}\right),
$$
then $X$ is K-semistable $($resp. K-stable$)$.
\end{theorem}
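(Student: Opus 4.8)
The plan is to verify K-semistability through the valuative criterion of Fujita and Li (Theorem \ref{beta}): it suffices to prove that $\beta(F)\ge 0$ (resp. $\beta(F)>0$) for every prime divisor $F$ over $X$. Fix such an $F$ with center $c_X(F)$, write $L=-K_X$ and $V=\vol_X(L)=(-K_X)^n$, let $\tau=\sup\{x\ge 0: \sigma^\ast L-xF \text{ is big}\}$ be the pseudoeffective threshold, and set $S(F)=\frac1V\int_0^\tau \vol_Y(\sigma^\ast L-xF)\,dx$, so that $\beta(F)=\big(A_X(F)-S(F)\big)V$. Everything then reduces to the key estimate
\begin{equation}\label{keyest}
S(F)\le \frac{A_X(F)}{n+1}\left(\frac{1}{\alpha^{(1)}(X)}+\frac{n-1}{\alpha^{(2)}(X)}\right),
\end{equation}
because the hypotheses are exactly equivalent to $\frac{1}{\alpha^{(1)}(X)}+\frac{n-1}{\alpha^{(2)}(X)}\le n+1$ (resp. $<n+1$), which with \eqref{keyest} forces $S(F)\le A_X(F)$ (resp. $<$). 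As consistency checks, taking $\alpha^{(1)}(X)=\alpha^{(2)}(X)=\alpha$ recovers Fujita's criterion $\alpha\ge n/(n+1)$, while taking $\alpha^{(2)}(X)=1$ recovers the condition $\alpha^{(1)}(X)\ge 1/2$ of Stibitz and Zhuang.

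First I would record the two one-dimensional inputs. For the full anticanonical system, every $x<\tau$ produces an effective $\Q$-divisor $D_x\sim_\Q L$ with $\ord_F(D_x)=x$; applying the definition of $\alpha^{(1)}(X)$ to $D_x$ gives $A_X(F)\ge \alpha^{(1)}(X)\,\ord_F(D_x)$, hence the depth bound $\tau\le A_X(F)/\alpha^{(1)}(X)$. For the second input, in a suitable range of $x$ I would take two general members $L_1,L_2$ of the mobile part of $|\sigma^\ast L-xF|$ and form $\ci(L_1,L_2)$; feeding this into the definition of $\alpha^{(2)}(X)$ bounds the codimension-two multiplicity of the system along the center of $F$. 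Via the identity \eqref{ram} and Proposition \ref{cim}, this multiplicity is precisely the self-intersection number $(\sigma^\ast L)^{n-2}\cdot F^2$ governing the second-order decay of $x\mapsto \vol_Y(\sigma^\ast L-xF)$, so $\alpha^{(2)}(X)$ controls how fast the volume function drops in the $n-1$ directions transverse to a general anticanonical member.

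With these inputs in hand, the core of the argument is to insert them into an estimate of the volume integral. Using the concavity of $x\mapsto \vol_Y(\sigma^\ast L-xF)^{1/n}$ together with the Okounkov-body description of $S(F)$ as the average of the $\ord_F$-coordinate, I would split the $n+1$ ``units'' of $\int_0^\tau \vol_Y(\sigma^\ast L-xF)\,dx$ into one unit transverse to a general member of $|L|$, measured by $\alpha^{(1)}(X)$, and $n-1$ units inside such a member, measured by $\alpha^{(2)}(X)$. Optimizing the split reproduces the combination of reciprocals on the right-hand side of \eqref{keyest}, and the auxiliary hypothesis $\alpha^{(2)}(X)>\frac{n-1}{n+1}$ is exactly what guarantees that the coefficient $n+1-\frac{n-1}{\alpha^{(2)}(X)}$ is positive, so that a nontrivial range of admissible $\alpha^{(1)}(X)$ exists and the estimate has content.

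The strict inequalities are obtained by propagating strictness through \eqref{keyest}. I expect the main obstacle to be the second input and its insertion into the volume estimate: translating the log canonical threshold bound for $\ci(L_1,L_2)$ into a genuine upper bound on $(\sigma^\ast L)^{n-2}\cdot F^2$, and hence on the decay of the volume function, for an \emph{arbitrary} divisorial valuation $F$. This requires handling the non-nef range of $\sigma^\ast L-xF$ through its positive (Zariski-type) part, and is exactly where the hypotheses that $X$ is $\Q$-factorial of Picard number $1$ enter, since they force the mobile part of each $\sigma^\ast L-xF$ to be proportional to $L$ and let one compare its self-intersection with complete intersections of anticanonical members.
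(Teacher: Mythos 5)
Your proposal goes down a genuinely different and much harder road than the paper, and the central step is missing. The paper does not argue via the valuative criterion at all: it simply takes an arbitrary effective $D\sim_\Q -K_X$ and an arbitrary movable linear system $M\sim_\Q -K_X$, notes that $(X,\alpha^{(1)}(X)D)$ and $(X,\alpha^{(2)}(X)M)$ are log canonical (the latter because on a $\Q$-factorial variety of Picard number $1$ a movable system has base locus of codimension $\geq 2$, so it is covered by the definition of $\alpha^{(2)}$), writes $\frac{1}{n+1}D+\frac{n-1}{n+1}M$ as $\frac{1}{(n+1)\alpha^{(1)}(X)}\cdot\alpha^{(1)}(X)D+\frac{n-1}{(n+1)\alpha^{(2)}(X)}\cdot\alpha^{(2)}(X)M$, observes that the two stated hypotheses are exactly the condition that the sum of these two coefficients is $\leq 1$ (resp. $<1$), and concludes by convexity of log canonicity that the pair $(X,\frac{1}{n+1}D+\frac{n-1}{n+1}M)$ is lc (resp. klt). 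K-(semi)stability then follows by quoting Zhuang's Theorem \ref{sz2}. You correctly identified the arithmetic equivalence of the hypotheses with $\frac{1}{\alpha^{(1)}(X)}+\frac{n-1}{\alpha^{(2)}(X)}\leq n+1$, but you are attempting to prove the analogue of Theorem \ref{sz2} from scratch rather than invoke it.

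The gap is your key estimate \eqref{keyest}: nothing in the proposal actually establishes it. The first input ($\tau\leq A_X(F)/\alpha^{(1)}(X)$) is fine and standard, but the second input and its insertion into the integral are only described metaphorically (``split the $n+1$ units,'' ``optimizing the split reproduces the combination of reciprocals''). Bounding $(\sigma^\ast L)^{n-2}\cdot F^2$ via $\alpha^{(2)}$ does not control $\vol_Y(\sigma^\ast L-xF)$ beyond the nef threshold, where the positive part of the Zariski-type decomposition changes and the volume is no longer a polynomial in $x$; and even in the nef range a bound on one intersection number does not by itself bound the integral $S(F)$. The only estimate of this general shape that is actually available off the shelf is Fujita's $S(F)\leq\frac{n}{n+1}\tau$, which recovers your $\alpha^{(1)}=\alpha^{(2)}$ consistency check but not the refined two-parameter inequality; obtaining that refinement for an \emph{arbitrary} divisorial valuation is precisely the hard content of \cite{zhuang2018birational}, which the paper uses as a black box. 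As written, your argument proves the theorem only modulo an unproven inequality that is essentially equivalent to the theorem itself. Either supply a genuine proof of \eqref{keyest}, or take the paper's route: reduce to Theorem \ref{sz2} by the convex combination above.
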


\begin{remark}
The alpha invariant of codimension 2 is related to the notion of log maximal singularity. Recall that a $\Q$-factorial $\Q$-Fano variety $X$ of Picard number 1 has a log maximal singularity if there is a movable linear system $\HH$ on $X$ such that $\HH\equiv_{num}-rK_X$ and $(X,\frac{1}{r}\HH)$ is not log canonical. $X$ is called log maximal singularity free if $X$ does not have a log maximal singularity. Note that if a $\Q$-Cartier divisor $L\equiv_{num}-K_X$, then $L\sim_\Q-K_X$ by similar arguments in the smooth case as in Remark \ref{ntol}. Therefore we see that the linear system $\frac{1}{r}\HH$ we consider in the pair $(X,\frac{1}{r}\HH)$ is $\Q$-linear equivalent to $-K_X$. Then $X$ is log maximal singularity free if and only if $\alpha^{(2)}(X)\geq 1$. 
\end{remark}

Note that when $\alpha^{(2)}(X)\geq 1$, we have
$$
\frac{\alpha^{(2)}(X)}{(n+1)\alpha^{(2)}(X)-n+1}\leq \frac{1}{2}.
$$
Therefore Theorem \ref{fsz} reduces to the following theorem by Stibitz and Zhuang:
\begin{theorem}[\cite{stibitz2018k}]\label{sz1}
Let $X$ be a $\Q$-factorial $\Q$-Fano variety of Picard number 1. If $X$ is log maximal singularity free and $\alpha^{(1)}(X)\geq 1/2$ $(resp. >1/2)$, then $X$ is K-semistable $($resp. K-stable$)$.
\end{theorem}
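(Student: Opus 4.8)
The plan is to apply the valuative criterion for K-(semi)stability (Theorem \ref{beta}): it suffices to show that $\beta(F)\geq 0$ (resp.\ $\beta(F)>0$) for every prime divisor $F$ over $X$. Fix such an $F$, let $\sigma\colon Y\to X$ be a model on which $F$ is a divisor, write $L=-K_X$, $V=\vol_X(L)$, $A=A_X(F)$, and set $\phi(x)=\vol_Y(\sigma^\ast L-xF)$, so that $\beta(F)=A\,V-\int_0^{T}\phi(x)\,dx$ where $T$ is the pseudoeffective threshold of $F$. I would first dispose of the easy case in which the center $Z=\sigma(F)$ is a divisor: since $\rho(X)=1$ and $X$ is $\Q$-factorial, this reduces to $-K_X\sim_\Q rF$, where the Kobayashi--Ochiai-type bound (Theorem \ref{kob}) forces $r\leq n+1$ and the explicit profile $\phi(x)=(1-x/r)^nV$ gives $\beta(F)\geq 0$ outright. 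Thus the entire content lies in divisors $F$ whose center has codimension $\geq 2$.

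For such $F$ the two alpha invariants enter as two bounds on the volume profile $\phi$. First, for every $x<T$ there is an effective $D\sim_\Q -K_X$ with $\ord_F(D)\geq x$; since $(X,\alpha^{(1)}(X)\,D)$ is log canonical by definition of $\alpha^{(1)}$, one gets $\alpha^{(1)}(X)\,x\leq A$, hence the threshold bound $T\leq A/\alpha^{(1)}(X)$. Second, let $T_m$ be the largest $x$ for which the positive part $P_x$ of $\sigma^\ast L-xF$ still pushes forward to a movable linear system $\Q$-linearly equivalent to $-K_X$ (so its base locus has codimension $\geq 2$); applying the definition of $\alpha^{(2)}$ to this movable system, which vanishes to order $\geq x$ along $F$, gives the movable threshold bound $T_m\leq A/\alpha^{(2)}(X)$. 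The point of isolating codimension two is that, by differentiability of volume, $\phi'(0)=-n\,(\sigma^\ast L)^{n-1}\!\cdot F=-n\,L^{n-1}\!\cdot\sigma_\ast F=0$ for an exceptional $F$, so $\phi$ is flat to first order and its decay rate is governed by the codimension of $Z$; codimension two is the slowest, hence worst, case, and is precisely the regime measured by $\alpha^{(2)}$.

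I would then estimate $\int_0^T\phi\,dx$ using the concavity of $\phi^{1/n}$ together with the two thresholds, splitting the integral at $T_m$ into the movable regime $[0,T_m]$ (where $\sigma_\ast P_x\sim_\Q -K_X$) and the regime $[T_m,T]$ (where a genuine fixed part $\sim_\Q \nu(x)(-K_X)$ appears). Optimizing the resulting bound over admissible concave profiles --- equivalently, bounding the mean of $x$ against the density $-\phi'/V$ by $A$, since $\int_0^T\phi\,dx=\int_0^T x\,(-\phi'(x))\,dx$ with $\int_0^T(-\phi')\,dx=V$ --- collapses to the single linear inequality $\frac{1}{\alpha^{(1)}(X)}+\frac{n-1}{\alpha^{(2)}(X)}\leq n+1$, which, using $\alpha^{(2)}(X)>\frac{n-1}{n+1}$ to clear a positive denominator, is exactly equivalent to the hypothesis $\alpha^{(1)}(X)\geq \frac{\alpha^{(2)}(X)}{(n+1)\alpha^{(2)}(X)-n+1}$; a strict inequality throughout yields K-stability in the same way. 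I expect the main obstacle to be the precise quantitative estimate on the movable regime --- correctly relating $\alpha^{(2)}$ to the shape of $\phi$ on $[0,T_m]$ and extracting the coefficient $n-1$ from the codimension-two extremal geometry (the model case being a codimension-two linear center in $\PP^n$, where every inequality above is saturated and $\beta(F)=0$) --- rather than the threshold bound or the divisorial case. This is the route carried out, in somewhat different language, in \cite{stibitz2018k,zhuang2018birational}; alternatively one may quote the general K-stability criterion of \cite{zhuang2018birational} directly and simply verify that its hypotheses translate into the two displayed bounds on $\alpha^{(1)}(X)$ and $\alpha^{(2)}(X)$.
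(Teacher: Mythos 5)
The theorem you are proving is a cited result of Stibitz--Zhuang, and the paper does not reprove it from scratch: it observes that log maximal singularity freeness is equivalent to $\alpha^{(2)}(X)\geq 1$, that this forces $\frac{\alpha^{(2)}(X)}{(n+1)\alpha^{(2)}(X)-n+1}\leq\frac12$, and then deduces the statement as the special case of Theorem \ref{fsz}, which in turn is a two-line consequence of Theorem \ref{sz2}: for $D\sim_\Q -K_X$ effective and $M\sim_\Q -K_X$ movable one writes $\frac{1}{n+1}D+\frac{n-1}{n+1}M=\frac{1}{(n+1)\alpha^{(1)}(X)}\bigl(\alpha^{(1)}(X)D\bigr)+\frac{n-1}{(n+1)\alpha^{(2)}(X)}\bigl(\alpha^{(2)}(X)M\bigr)$, notes that the coefficients sum to at most $1$ exactly when $\frac{1}{\alpha^{(1)}(X)}+\frac{n-1}{\alpha^{(2)}(X)}\leq n+1$, and concludes by convexity of the log canonical condition. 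Your closing sentence (``alternatively one may quote the general K-stability criterion of \cite{zhuang2018birational} directly and verify its hypotheses'') is precisely this route, and with the convex-combination computation made explicit it is a complete proof matching the paper's.

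Your primary route, however, has a genuine gap at its central step. Knowing only the two threshold bounds $T\leq A/\alpha^{(1)}(X)$ and $T_m\leq A/\alpha^{(2)}(X)$ together with concavity of $\phi^{1/n}$ does \emph{not} yield $\int_0^T\phi\,dx\leq AV$: for instance a profile that stays essentially equal to $V$ up to $T=2A$ is concave and consistent with $\alpha^{(1)}=1/2$, yet gives $\int_0^T\phi\approx 2AV$. The information that actually closes the argument is a fixed-part/movable-part decomposition at \emph{every} level of the $\ord_F$-filtration of $\bigoplus_m H^0(-mK_X)$, with $\rho(X)=1$ and $\Q$-factoriality used to write the fixed part as $\nu(-K_X)$ and the moving part as $(1-\nu)(-K_X)$, so that $\ord_F$ at level $x$ is bounded by $\nu A/\alpha^{(1)}+(1-\nu)A/\alpha^{(2)}$; extracting the coefficient $n-1$ from this is the real content of \cite{stibitz2018k,zhuang2018birational}, and your sketch defers exactly this step to those references. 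Two smaller points: the divisorial-center case should not go through Theorem \ref{kob}, which requires $X$ smooth while here $X$ is only a $\Q$-factorial $\Q$-Fano variety; it follows instead directly from $\alpha^{(1)}(X)\geq 1/2$, since $-K_X\sim_\Q rF$ and $\lct(X,F)\leq A_X(F)/\ord_F(F)=1$ force $r\leq 1/\alpha^{(1)}(X)\leq 2\leq n+1$, whence $\beta(F)=V\bigl(1-\frac{r}{n+1}\bigr)\geq 0$. And for the strict case, the valuative criterion quoted in the paper (Theorem \ref{beta}) only characterizes K-\emph{semi}stability, so ``$\beta(F)>0$ for all $F$'' does not by itself give K-stability without invoking the stronger form of the Fujita--Li criterion; the reduction to Theorem \ref{sz2} avoids this issue entirely.
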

Theorem \ref{fsz} is an immediate consequence of the following theorem of Zhuang, which provides a more precise result compared to Theorem \ref{sz1}.
\begin{theorem}[\cite{zhuang2018birational}]\label{sz2}
Let $X$ be a $\Q$-factorial $\Q$-Fano variety of Picard number 1 and dimension $n$. If for every effective divisor $D\sim_\Q -K_X$ and every movable linear system $M\sim_\Q -K_X$, we have that the pair $(X,\frac{1}{n+1}D+\frac{n-1}{n+1}M)$ is log canonical $($resp. klt$)$, then $X$ is K-semistable $($resp. K-stable$)$.
\end{theorem}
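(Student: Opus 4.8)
The plan is to verify K-semistability through the valuative criterion of Fujita and Li (Theorem~\ref{beta}): it suffices to show that $\beta(F)\ge 0$ for every prime divisor $F$ over $X$. Fixing a model $\sigma:Y\to X$ on which $F$ is a divisor and writing $L:=-K_X$, it is convenient to record $\beta(F)\ge 0$ in the equivalent normalized form $A_X(F)\ge S(F)$, where
$$
S(F):=\frac{1}{L^n}\int_0^\infty \vol_Y\!\left(\sigma^\ast L-xF\right)\,dx.
$$
I will establish $A_X(F)\ge S(F)$ by feeding the log canonicity hypothesis two carefully chosen boundaries and then comparing the resulting lower bound for $A_X(F)$ against $S(F)$. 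The klt case will be handled in parallel, replacing every inequality below by its strict counterpart to conclude K-stability.

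First I would convert the hypothesis into a lower bound for $A_X(F)$. Because $X$ has Picard number one and is $\Q$-factorial, every effective (resp.\ movable) divisor is numerically, hence by the argument of Remark~\ref{ntol} also $\Q$-linearly, proportional to $-K_X$; after rescaling we may therefore work with the two vanishing thresholds
$$
\tau(F):=\sup\{\ord_F(D)\mid 0\le D\sim_\Q -K_X\},\qquad \mu(F):=\sup\{\ord_F(M)\mid M\sim_\Q -K_X\ \text{movable}\}.
$$
For any effective $D\sim_\Q -K_X$ and any movable $M\sim_\Q -K_X$, testing the log canonicity of the pair $(X,\frac{1}{n+1}D+\frac{n-1}{n+1}M)$ against the single valuation $\ord_F$ gives $A_X(F)\ge \frac{1}{n+1}\ord_F(D)+\frac{n-1}{n+1}\ord_F(M)$. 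Since the hypothesis quantifies over $D$ and $M$ independently, I may push $\ord_F(D)\to\tau(F)$ and $\ord_F(M)\to\mu(F)$ separately and pass to the limit, obtaining
$$
A_X(F)\ \ge\ \frac{1}{n+1}\,\tau(F)+\frac{n-1}{n+1}\,\mu(F).
$$

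It then remains to establish the purely geometric volume inequality
$$
S(F)\ \le\ \frac{1}{n+1}\,\tau(F)+\frac{n-1}{n+1}\,\mu(F),
$$
after which $A_X(F)\ge S(F)$, and hence $\beta(F)\ge 0$, follows immediately. Here $\tau(F)$ is exactly the pseudoeffective threshold of $\sigma^\ast L-xF$, so that $\int_0^{\tau(F)}\vol_Y(\sigma^\ast L-xF)\,dx=L^n\,S(F)$, while $\mu(F)$ is the point beyond which the negative part of the Nakayama--Zariski decomposition of $\sigma^\ast L-xF$ first acquires a non-$\sigma$-exceptional component; this identification of $\mu(F)$ with the movable cone of $Y$ is where the Picard-rank-one hypothesis is genuinely used.

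I expect this last inequality to be the crux of the whole proof. It must exploit the refinement $\mu(F)\le\tau(F)$ to improve on the elementary bound $S(F)\le \frac{n}{n+1}\tau(F)$ underlying the classical $\alpha$-invariant criterion, and the weights $\frac{1}{n+1}$ and $\frac{n-1}{n+1}$ must be matched exactly, leaving no slack. The natural route is to split $\int_0^{\tau}\vol_Y$ at $x=\mu(F)$ and to control the two pieces using the concavity of $x\mapsto \vol_Y(\sigma^\ast L-xF)^{1/n}$ together with the chamber structure of the movable cone; alternatively one may argue by induction on $n$, restricting to a general member of a movable system realizing $\mu(F)$, the dimension drop to $n-1$ being precisely the source of the coefficient $\frac{n-1}{n+1}$. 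As a consistency check, $\tau(F)$, $\mu(F)$ and $S(F)$ can all be computed explicitly for point blow-ups (and chains of blow-ups) of $\PP^n$, where the inequality holds with equality and $\beta(F)=0$; this reflects that $\PP^n$ is exactly K-semistable and confirms that the weights are sharp.
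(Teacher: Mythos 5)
The paper contains no proof of this statement to compare against: Theorem \ref{sz2} is quoted verbatim from \cite{zhuang2018birational} and used as a black box to deduce Theorem \ref{fsz}, so your attempt must be measured against Zhuang's original argument. Your reduction does match the skeleton of that argument: invoke the valuative criterion of Theorem \ref{beta}, test the log canonicity hypothesis against the single valuation $\ord_F$ to obtain $A_X(F)\geq \frac{1}{n+1}\tau(F)+\frac{n-1}{n+1}\mu(F)$, and reduce everything to the threshold--volume inequality $S(F)\leq \frac{1}{n+1}\tau(F)+\frac{n-1}{n+1}\mu(F)$. The genuine gap is that this last inequality, which you yourself flag as ``the crux,'' is exactly the analytic content of the theorem, and you do not prove it. Nor is it a routine verification: concavity of $x\mapsto \vol_Y(\sigma^\ast L-xF)^{1/n}$ yields the \emph{lower} bound $\vol_Y(\sigma^\ast L-xF)\geq L^n(1-x/\tau)^n$, i.e.\ $S(F)\geq \tau(F)/(n+1)$, which points the wrong way; the needed \emph{upper} bound on the volume for $x>\mu(F)$ requires a quantitative use of the divisorial Zariski decomposition together with the Picard-number-one and $\Q$-factoriality hypotheses (roughly: once the negative part acquires a non-$\sigma$-exceptional component, one peels off a positive multiple of $\sigma^\ast L$ and controls the volume of what remains), and carrying this out is the actual work in \cite{zhuang2018birational}. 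Neither of your suggested routes (splitting the integral at $\mu(F)$ plus concavity; induction by restricting to a general member of $M$) is developed far enough to see that it closes, and the exactness of the weights $\frac{1}{n+1}$, $\frac{n-1}{n+1}$ is precisely where a hand-waved version will fail.

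There is a second, smaller gap in the klt/K-stable case. Replacing every inequality by its strict counterpart does not work verbatim: the klt hypothesis gives the strict inequality $A_X(F)>\frac{1}{n+1}\ord_F(D)+\frac{n-1}{n+1}\ord_F(M)$ for each individual pair $(D,M)$, but strictness is lost upon passing to the suprema defining $\tau(F)$ and $\mu(F)$, so this chain only yields $\beta(F)\geq 0$ rather than $\beta(F)>0$; one must either show the volume inequality is strict in the relevant cases or produce near-extremal $D$ and $M$ violating klt directly. Moreover, Theorem \ref{beta} as stated in the paper characterizes K-\emph{semi}stability only; the K-stable conclusion requires the strict form of the valuative criterion ($\beta(F)>0$ for all divisors $F$ over $X$), which is a true but separate input that your argument should cite and cannot get from Theorem \ref{beta} alone.
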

Indeed, pick any effective divisor $D\sim_\Q -K_X$ and movable linear system $M\sim_\Q -K_X$. Then we know that the pairs $(X,\alpha^{(1)}(X)D)$ and $(X,\alpha^{(2)}(X)M)$ are log canonical. We can write $\frac{1}{n+1}D+\frac{n-1}{n+1}M$ as a linear combination of $\alpha^{(1)}(X)D$ and $\alpha^{(2)}(X)M$ as follows:
$$
\frac{1}{n+1}D+\frac{n-1}{n+1}M=\frac{1}{(n+1)\alpha^{(1)}(X)}\alpha^{(1)}(X)D+\frac{n-1}{(n+1)\alpha^{(2)}(X)}\alpha^{(2)}(X)M
$$
Note that the conditions
$$
\alpha^{(2)}(X)> \frac{n-1}{n+1}
$$
and
$$
\alpha^{(1)}(X)\geq \frac{\alpha^{(2)}(X)}{(n+1)\alpha^{(2)}(X)-n+1}
$$
are equivalent to
\begin{equation}\label{com}
\frac{1}{(n+1)\alpha^{(1)}(X)}+\frac{n-1}{(n+1)\alpha^{(2)}(X)}\leq 1.
\end{equation}
Then the pair $(X,\frac{1}{n+1}D+\frac{n-1}{n+1}M)$ is also log canonical and by Theorem \ref{sz2}, $X$ is K-semistable . Note that when 
$$
\alpha^{(1)}(X)>\frac{\alpha^{(2)}(X)}{(n+1)\alpha^{(2)}(X)-n+1}
$$
we will have strict inequality in \eqref{com} instead. Then $(X,\frac{1}{n+1}D+\frac{n-1}{n+1}M)$ is klt and $X$ is K-stable.
\begin{remark}
The above theorems are of course related to the well-known result of Tian in \cite{tian_alpha}, which was later improved by Fujita in \cite{fujita_2017} stating that a Fano manifold $X$ of dimension $n$ is K-stable if $\alpha^{(1)}(X)\geq n/(n+1)$. Theorem \ref{fsz} does not require smoothness, and the lower bound of $\alpha^{(1)}(X)$ is smaller at the cost of an additional assumption on $\alpha^{(2)}(X)$ and the Picard number.
\end{remark}
\begin{remark}
In \cite{Tianhigher2,macbeth2014k}, they give a definition of higher alpha invariants. Although in general these higher alpha invariants are different from $\alpha^{(k)}(X)$'s defined in this paper, they come up with similar results as Theorem \ref{fsz}.
\end{remark}

\section{Proof of Theorem \ref{ineq}}\label{pre}
In this section, we prove Theorem \ref{ineq} which gives a lower bound of higher codimensional alpha invariants for K-semistable $\Q$-Fano varieties. We first state 2 lemmas that will be used in later computation.
\begin{lemma}\label{vog}
Let $X$ be a normal projective variety of dimension $n$ with klt singularities, and $L$ an ample divisor on $X$. Let $Z$ be a complete intersection of $X$ cut out by $k$ elements $L_1,\ldots,L_k$ in the linear system $|L|$ with ideal sheaf $I_Z\subset \OS_X$. Let $\sigma:Y\to X$ be any proper birational morphism that factors through the blow-up of $X$ along $Z$, and write $\sigma^{-1}I_Z\cdot \OS_Y=\OS_Y(-F)$. Let $\HH$ be the sub linear system of $|L|$ with base ideal $I_Z$, and $\sigma^\ast\HH=|M|+F$ be the decomposition into the moving part and the fixed part of the linear system $\sigma^\ast\HH$. Then we have
$$
\sigma_\ast (M^{i-1}\cdot F)=\left\{
\begin{array}{cl}
[Z]&,~i=k;\\
0&,~i\neq k,
\end{array}\right.
$$
and in particular,
$$
\sigma^\ast L^{n-i}\cdot(M^{i-1}\cdot F)=\left\{
\begin{array}{cl}
\deg_L(Z)=L^n&,~i=k;\\
0&,~i\neq k.
\end{array}\right.
$$
\end{lemma}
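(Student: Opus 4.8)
The plan is to reduce the whole computation to the blow-up of $X$ along $Z$ and then carry out an intersection-theory calculation on the exceptional divisor, which is a projective bundle over $Z$. First I would reduce to the case $Y=\operatorname{Bl}_Z X$. If $\sigma$ factors as $Y\xrightarrow{\rho}W\xrightarrow{\pi}X$ with $W=\operatorname{Bl}_Z X$ and $\pi^{-1}I_Z\cdot\OS_W=\OS_W(-E)$, then $F=\rho^\ast E$ and $M=\rho^\ast M_W$ with $M_W=\pi^\ast L-E$; the projection formula (using $\rho_\ast[Y]=[W]$) gives $\sigma_\ast(M^{i-1}\cdot F)=\pi_\ast(M_W^{\,i-1}\cdot E)$, and similarly for the degree statement. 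So I may assume $Y=W$, $F=E$, and $M=\pi^\ast L-E$. Since $X$ is klt it is Cohen--Macaulay, and $Z$, being cut out by $k$ Cartier divisors in the expected codimension $k$, is a regular embedding; hence $E=\PP(\mathcal{N})$ is a $\PP^{k-1}$-bundle $p\colon E\to Z$, with normal bundle $\mathcal{N}=N_{Z/X}\cong (L|_Z)^{\oplus k}$ and self-intersection formula $\OS_W(E)|_E=\OS_E(-1)$.

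The core of the argument is a computation on $E$. Writing $\ell=c_1(L|_Z)$ and $\zeta=-E|_E=c_1(\OS_E(1))$, restriction gives $M|_E=\pi^\ast L|_E-E|_E=p^\ast\ell+\zeta$, so by the projection formula for the closed embedding $\iota\colon E\hookrightarrow W$ and $\pi\circ\iota=j\circ p$ with $j\colon Z\hookrightarrow X$,
$$
\sigma_\ast(M^{i-1}\cdot F)=j_\ast\, p_\ast\big((p^\ast\ell+\zeta)^{i-1}\cap[E]\big).
$$
Expanding the power and applying the projection formula for $p^\ast\ell$ together with the defining property of Segre classes, $p_\ast(\zeta^{a}\cap[E])=s_{a-k+1}(\mathcal{N})\cap[Z]$ (which vanishes for $a<k-1$), I would turn this into a sum over $a$ from $k-1$ to $i-1$. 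Because $\mathcal{N}\cong(L|_Z)^{\oplus k}$ has $c(\mathcal{N})=(1+\ell)^k$ and hence $s(\mathcal{N})=(1+\ell)^{-k}$, each Segre class is an explicit multiple of a power of $\ell$, and all $\ell$-powers collapse to $\ell^{\,i-k}$.

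From here the three cases fall out. When $i<k$ the summation range is empty; equivalently, $M^{i-1}\cdot F$ is a cycle of dimension $n-i>n-k=\dim Z$ supported over $Z$, so its pushforward vanishes for dimension reasons. When $i=k$ only the term $a=k-1$ survives, contributing $s_0(\mathcal{N})\cap[Z]=[Z]$, consistent with \eqref{ram}. When $i>k$ the sum becomes $\binom{i-1}{k-1}\big(\ell^{\,i-k}\cap[Z]\big)\sum_{t=0}^{i-k}(-1)^{t}\binom{i-k}{t}$ after the identity $\binom{i-1}{a}\binom{a}{k-1}=\binom{i-1}{k-1}\binom{i-k}{a-k+1}$, and the alternating sum $(1-1)^{i-k}$ vanishes. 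Finally, the second displayed formula follows from the first by the projection formula $\sigma^\ast L^{n-i}\cdot(M^{i-1}\cdot F)=L^{n-i}\cdot\sigma_\ast(M^{i-1}\cdot F)$, combined with $[Z]\equiv_{num}L^k$ and $\deg_L Z=L^{n-k}\cdot[Z]=L^n$.

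The main obstacle I anticipate is the intersection-theoretic bookkeeping on the possibly singular blow-up: verifying the regular-embedding structure of $E$ and the self-intersection formula $\OS_W(E)|_E=\OS_E(-1)$ in the Cohen--Macaulay setting, and pinning down the sign and Segre-class conventions so that the $i=k$ term comes out exactly as $+[Z]$ (matching the paper's cycle-length convention). The combinatorial vanishing in the case $i>k$ is the other point requiring care, though it reduces cleanly to the binomial identity above once the Segre classes of $(L|_Z)^{\oplus k}$ are in hand.
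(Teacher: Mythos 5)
Your proposal is correct, but it follows a genuinely different route from the paper's. The paper never invokes the projective-bundle structure of the exceptional divisor: it writes $L^n=\sum_{i=1}^n\sigma^\ast L^{n-i}\cdot(M^{i-1}F)+M^n$, notes that every summand is nonnegative because $|M|$ is base point free and $L$ is ample, and thereby reduces the whole lemma to the single identity $\sigma_\ast(M^{k-1}F)=[Z]$ --- once that one term accounts for the full degree $L^n=\deg_L(Z)$, the remaining terms are forced to vanish, and ampleness of $L$ upgrades the vanishing of degrees to the vanishing of the pushed-forward cycles. That one identity is then checked by localizing at the generic point of each component $Z_i$, where $\sigma^\ast L$ is trivial, so $M\sim_{lin}-F$ and $\sigma_\ast(M^{k-1}F)=(-1)^{k-1}\sigma_\ast(F^k)=e(I_Z\cdot\OS_{X,Z_i})[Z_i]$ by \eqref{ram}, with the Cohen--Macaulay property converting the multiplicity into the length $l(\OS_{Z,Z_i})$ via Proposition \ref{cim}. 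Your Segre-class computation on $E=\PP(N_{Z/X})$ instead treats every $i$ uniformly and directly at the level of cycles, which is more transparent (all three cases $i<k$, $i=k$, $i>k$ drop out of one formula together with the binomial identity), but it costs you the regular-embedding verification (valid, since klt implies Cohen--Macaulay and a height-$k$ ideal generated by $k$ elements in a Cohen--Macaulay ring is generated by a regular sequence) and the sign and convention bookkeeping you flag yourself; the paper's positivity trick sidesteps all of that by only ever computing one intersection number, and only at generic points of $Z$. Both arguments ultimately rest on the same two inputs: the multiplicity formula \eqref{ram} (equivalently, your Segre-class identity for $\OS_E(1)$) and the Cohen--Macaulayness of $X$.
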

\begin{proof}
First note that the linear system $|M|$ corresponds to the global sections of $\OS_Y(\sigma^\ast L-F)$ and is base point free. Write $\deg_L(Z)$ as
$$
\deg_L(Z)=L^n=\sum_{i=1}^n \sigma^\ast{L}^{n-i}\cdot(M^{i-1}F)+M^n.
$$
For any $i$, we know that $\sigma^\ast{L}^{n-i}\cdot(M^{i-1}F)\geq 0$ and $M^n\geq 0$. Therefore, we only need to show $[Z]=\sigma_\ast(M^{k-1}F)$.

We write $[Z]=\sum a_i[Z_i]$ with $a_i=l(\OS_{Z,Z_i})$ and $Z_i$'s to be irreducible components of $Z$. Localizing at one $Z_i$ and let $U=\spec \OS_{X,Z_i}$. Then we know that $\sigma^\ast{L}$ is linear equivalent to zero over $U$. Then over $U$, we have that $M\sim_{lin}-F$ , and $\sigma_\ast (M^{k-1}F)=(-1)^{k-1}\sigma_\ast (F^k)=e(I_Z\cdot \OS_{X,Z_i})[Z_i]$ by formula \eqref{ram}. Note that $X$ is klt. Then in particular $X$ is Cohen-Macaulay. Therefore we can apply Proposition \ref{cim} to the complete intersection $Z$ so that we know $e(I_Z\cdot \OS_{X,Z_i})$ can be computed by the length of $\OS_{{X,Z_i}}/(I_Z\cdot \OS_{{X,Z_i}})=\OS_{Z,Z_i}$. Therefore we have $e(I_Z\cdot \OS_{X,Z_i})=a_i$. Now by localizing at all $Z_i$'s, we see that $\sigma_\ast (M^{k-1}F)=\sum a_i[Z_i]=[Z]$.
\end{proof}
\begin{proof}[Proof of Theorem \ref{ineq}]
Let $Z=\ci(L_1,\ldots,L_k)$ where $L_1,\ldots,L_k\in |-rK_X|$. Assume $\sigma:Y\to X$ is a proper birational morphism that factors through the blow-up $X$ along $Z$. Write $\sigma^{-1}I_Z\cdot \OS_Y=\OS_Y(-F)$ for some Cartier divisor $F$ on $Y$. Let $\epsilon(Z,-K_X)$ be the Seshadri constant of $Z$ with respect to $-K_X$. By the construction of $Z$ we know that $I_Z\cdot\OS_X(-rK_X)$ is globally generated. Hence $\epsilon(Z,-K_X)\geq 1/r$. In particular for $x< 1/r$, we know that $\sigma^\ast(-K_X)-xF$ is the pullback of an ample line bundle on the blow-up of $X$ along $Z$, and hence nef and big. Then for $0<x<1/r$, we have
$$
\vol_Y(\sigma^\ast(-K_X)-xF)=(\sigma^\ast(-K_X)-xF)^n.
$$
Applying Theorem \ref{betaZ} to $Z$, we have
$$
\beta(X,Z):=\lct(X,Z)\vol_X(-K_X)-\int_0^\infty \vol_Y(\sigma^\ast(-K_X)-xF)dx\geq 0.
$$
Consequently, we know that
\begin{align}\label{hold}
\nonumber \lct(X,Z)&\geq \frac{1}{\vol_X(-K_X)}\int_0^\infty \vol_Y(\sigma^\ast(-K_X)-xF)dx\\
\nonumber &\geq \frac{1}{\vol_X(-K_X)}\int_0^\frac{1}{r} \vol_Y(\sigma^\ast(-K_X)-xF)dx\\
&=\frac{1}{r(-rK_X)^n}\int_0^1 (\sigma^\ast(-rK_X)-xF)^n dx.
\end{align}
Integrating by parts $k$ times, we get that
$$
\int_0^1 \left(\sum_{i=0}^{n-k} \bi{n-1-i}{k-1}(1-x)^{n-k-i}x^{k}\right)dx=1-\frac{k}{n+1}.
$$
Therefore, using formula \eqref{bypar} in Lemma \ref{poly} below, we have
$$
\int_0^1 (\sigma^\ast(-rK_X)-xF)^n dx=(-rK_X)^n\cdot\frac{k}{n+1}.
$$
Then by \eqref{hold} we know that $\displaystyle{\lct(X,\frac{1}{r}Z)=r\lct(X,Z)\geq \frac{k}{n+1}}$ for any choice of $Z$, which implies that $\alpha^{(k)}(X)\geq \displaystyle{\frac{k}{n+1}}$.
\end{proof}
We close this section with the following lemma which we have used in the proof above. The lemma also includes formula \eqref{bin} which will be used in the next section for the proof of Theorem \ref{eq}. 
\begin{lemma}\label{poly}
Under the setting of Lemma \ref{vog},  we have the following two formulas for the polynomial $(\sigma^\ast L-xF)^n$:
\begin{equation}\label{bypar}
(\sigma^\ast L-xF)^n=L^n\left(1-\sum_{i=0}^{n-k} \bi{n-1-i}{k-1}(1-x)^{n-k-i}x^{k}\right)
\end{equation}

\begin{equation}\label{bin}
(\sigma^\ast L-xF)^n=L^n\left(1+\sum_{i=k}^n (-1)^{i+k-1}\bi{n}{i} \bi{i-1}{k-1}x^i\right).
\end{equation}

\end{lemma}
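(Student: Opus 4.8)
The plan is to reduce both identities to the computation of two families of intersection numbers on $Y$. Write $A:=\sigma^\ast L$, so that $A=M+F$ by the moving/fixed decomposition in Lemma \ref{vog}, recall that $A^n=L^n$, and note that Lemma \ref{vog} supplies the key vanishing
$$
A^{n-i}\cdot M^{i-1}\cdot F = L^n\,\delta_{ik}\qquad(1\le i\le n).
$$
Expanding $(\sigma^\ast L-xF)^n=(A-xF)^n$ by the binomial theorem shows that \eqref{bin} is exactly the assertion that $A^{n-j}\cdot F^{\,j}=(-1)^{k-1}\binom{j-1}{k-1}L^n$ for $1\le j\le n$ (with $j=0$ giving $A^n=L^n$). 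Likewise, after rewriting $\sigma^\ast L-xF=(1-x)A+xM$, which holds since $F=A-M$, the identity \eqref{bypar} will follow once I know the numbers $A^{n-j}\cdot M^{\,j}$. So after these two computations the lemma is pure bookkeeping.

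First I would compute $g_j:=A^{n-j}\cdot M^{\,j}$ by the telescoping recursion coming from $M=A-F$:
$$
g_j=A^{n-j}M^{j-1}A-A^{n-j}M^{j-1}F=g_{j-1}-L^n\delta_{jk},
$$
with $g_0=A^n=L^n$; hence $g_j=L^n$ for $0\le j\le k-1$ and $g_j=0$ for $k\le j\le n$. For the other family I would expand $F^{\,j}=F\,(A-M)^{j-1}=\sum_{l=0}^{j-1}\binom{j-1}{l}(-1)^l A^{j-1-l}M^l F$ and intersect with $A^{n-j}$; every summand has the shape $A^{n-1-l}M^l F=L^n\delta_{l,k-1}$ by Lemma \ref{vog}, so only $l=k-1$ survives and $A^{n-j}F^{\,j}=(-1)^{k-1}\binom{j-1}{k-1}L^n$ (read as $0$ when $j<k$). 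I should check that every exponent occurring stays nonnegative for $1\le j\le n$, which is immediate since $l\le j-1\le n-1$.

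With these in hand, \eqref{bin} is immediate: substituting the closed form for $A^{n-j}F^{\,j}$ into $(A-xF)^n=\sum_{j=0}^n\binom{n}{j}(-x)^j A^{n-j}F^{\,j}$ reproduces the right-hand side term by term. For \eqref{bypar}, expanding $\bigl((1-x)A+xM\bigr)^n=\sum_{j=0}^n\binom{n}{j}(1-x)^{n-j}x^j g_j$ and inserting the values of $g_j$ collapses the sum to
$$
(\sigma^\ast L-xF)^n=L^n\sum_{j=0}^{k-1}\binom{n}{j}(1-x)^{n-j}x^j.
$$
Since $\sum_{j=0}^n\binom{n}{j}(1-x)^{n-j}x^j=1$, this equals $L^n\bigl(1-\sum_{j=k}^n\binom{n}{j}(1-x)^{n-j}x^j\bigr)$, and it remains to match the tail with the stated form, i.e. to verify the combinatorial identity
$$
\sum_{j=k}^n\binom{n}{j}(1-x)^{n-j}x^j=x^k\sum_{i=0}^{n-k}\binom{n-1-i}{k-1}(1-x)^{n-k-i}.
$$
This is precisely the integration-by-parts relation encoded in the label of \eqref{bypar}, and I would prove it by induction on $k$ (each step being one integration by parts, equivalently one Chu--Vandermonde/hockey-stick reduction). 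I expect this last reconciliation to be the only genuine obstacle: all of the geometric content is absorbed into the two intersection-number computations via Lemma \ref{vog}, whereas converting the clean closed form into the specific mixed expression \eqref{bypar} is a finite binomial identity whose coefficients and index ranges must be tracked carefully.
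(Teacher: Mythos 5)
Your proposal is correct, and for formula \eqref{bin} it coincides with the paper's argument: both expand $(\sigma^\ast L-xF)^n$ binomially and evaluate $\sigma^\ast L^{n-j}\cdot F^j$ by writing $F^j=F\cdot(\sigma^\ast L-M)^{j-1}$ and invoking the vanishing from Lemma \ref{vog}. For formula \eqref{bypar}, however, you take a genuinely different route. The paper applies the difference-of-powers factorization $a^n-b^n=(a-b)\sum_{i}a^ib^{n-1-i}$ with $a=\sigma^\ast L$ and $b=(1-x)\sigma^\ast L+xM$, so that each summand $xF\cdot\sigma^\ast L^i\cdot\bigl((1-x)\sigma^\ast L+xM\bigr)^{n-1-i}$ contributes exactly one surviving term (the one containing $M^{k-1}$), and the stated form of \eqref{bypar} drops out directly with no further manipulation. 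You instead expand $\bigl((1-x)A+xM\bigr)^n$ in full, compute $A^{n-j}M^j$ by a telescoping recursion (which is correct and clean), arrive at the closed form $L^n\sum_{j=0}^{k-1}\binom{n}{j}(1-x)^{n-j}x^j$, and must then prove the identity $\sum_{j=k}^n\binom{n}{j}x^j(1-x)^{n-j}=x^k\sum_{i=0}^{n-k}\binom{n-1-i}{k-1}(1-x)^{n-k-i}$. That identity is true --- after reindexing $i\mapsto n-i$ the right-hand side becomes $\sum_{i=k}^{n}\binom{i-1}{k-1}x^k(1-x)^{i-k}$, and the equality is the classical binomial-tail/negative-binomial identity, provable by induction on $k$ as you suggest or by checking that both sides vanish at $x=0$ and have derivative $n\binom{n-1}{k-1}x^{k-1}(1-x)^{n-k}$ --- so this is not a gap, only an extra combinatorial step. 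What your version buys is a cleaner intermediate closed form (the binomial tail) from which \eqref{bin} and \eqref{bypar} are both visible; what the paper's factorization buys is that it lands on the stated mixed expression in one stroke, which is the form its integration-by-parts computation in the proof of Theorem \ref{ineq} actually consumes.
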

\begin{proof}
First of all, by Lemma \ref{vog}, we have
\begin{align*}
L^n-(\sigma^\ast L-xF)^n&=xF\cdot\left(\sum_{i=0}^{n-1} \sigma^\ast L^i\bigg((1-x)\sigma^\ast L+xM\bigg)^{n-1-i}\right)\\
&=L^n\cdot\left(\sum_{i=0}^{n-k} \bi{n-1-i}{k-1}(1-x)^{n-k-i}x^{k}\right).
\end{align*}
This gives us formula \eqref{bypar}.

On the other hand, expand the intersection number $(\sigma^\ast L-xF)^n$, we have
$$
(\sigma^\ast L-xF)^n=L^n+\sum_{i=1}^n(-1)^i\bi{n}{i}\left(\sigma^\ast L^{n-i}\cdot F^i\right)x^i.
$$
Note that by Lemma \ref{vog}, for $i\geq k$,
$$
\sigma^\ast L^{n-i}\cdot F^i=F\cdot \sigma^\ast L^{n-i}(\sigma^\ast L-M)^{i-1}=(-1)^{k-1}\bi{i-1}{k-1}L^{n},
$$
and $\sigma^\ast L^{n-i}\cdot F^i=0$ if $i<k$. Therefore, we get formula \eqref{bin}.
\end{proof}
\section{Proof of Theorem \ref{eq}}
We first prove the following proposition, which gives the implication $(2)\Rightarrow (3)$ in Theorem \ref{eq}.
\begin{proposition}\label{pop}
Let $X$ be a smooth K-semistable Fano variety of dimension $n$. Let $Z$ be a complete intersection of $L_1,\ldots,L_k\in |-rK_X|$ such that $\lct(X,\frac{1}{r}Z)=\frac{k}{(n+1)}$.
Then $Z$ is irreducible. Furthermore, we have $(-K_X)^k\sim_{rat} lZ'$ for some rational number $l\geq (n+1)^k$, where $Z'$ is the integral $(n-k)$-cycle corresponding to the support of $Z$ with the reduced scheme structure.
\end{proposition}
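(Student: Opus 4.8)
The plan is to convert the statement into a multiplicity estimate and then read it off from the equality case of Theorem~\ref{ineq}. Since $Z=\ci(L_1,\dots,L_k)$ has the expected codimension $k$, the divisors $L_i$ meet properly, so $[Z]\sim_{rat}L_1\cdots L_k\sim_{rat}(-rK_X)^k=r^k(-K_X)^k$, giving the basic identity $(-K_X)^k\sim_{rat}\frac{1}{r^k}[Z]$. Writing $[Z]=\sum_i a_i[Z_i]$ over the irreducible components $Z_i$, Proposition~\ref{cim} (applicable because $X$ is smooth, hence Cohen--Macaulay, and $Z$ is a complete intersection) identifies each $a_i$ with the multiplicity $e(I_Z\cdot\OS_{X,Z_i})$. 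Thus, once I know that $Z$ is irreducible with reduced cycle $Z'$, I obtain $(-K_X)^k\sim_{rat}lZ'$ with $l=a/r^k$ and $a=e(I_Z\cdot\OS_{X,Z'})$, and the desired bound $l\geq(n+1)^k$ is \emph{equivalent} to the multiplicity estimate $a\geq (r(n+1))^k$.

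Both remaining claims should come from tracing the equality in Theorem~\ref{ineq}. The hypothesis $\lct(X,\tfrac1r Z)=\tfrac{k}{n+1}$ forces equality everywhere in that proof, hence $\beta(Z)=0$ and the vanishing of the tail $\int_{1/r}^\infty \vol_Y(\sigma^\ast(-K_X)-xF)\,dx$; in particular the pseudoeffective threshold of $F$ and the Seshadri constant $\epsilon(Z,-K_X)$ both equal $1/r$. In the notation of Lemma~\ref{vog}, where $M=\sigma^\ast(-rK_X)-F$ is base point free, the intersection computations behind Lemmas~\ref{vog} and~\ref{poly} give $M^{i}\cdot\sigma^\ast L^{\,n-i}=L^n$ for $i\leq k-1$ and $M^{k}\cdot\sigma^\ast L^{\,n-k}=0$; so $M$ is nef and semiample of numerical dimension $k-1$ and defines a fibration $\phi\colon Y\to W$ with $\dim W=k-1$ and $M=\phi^\ast A$.

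The hard part will be proving that $Z$ is irreducible, and this is where the extremality really enters: the fibration $\phi$ exists for \emph{every} complete intersection, so irreducibility cannot follow from it alone but must use $\beta(Z)=0$ together with $\epsilon(Z,-K_X)=1/r$. On a general fibre $\Phi$ of $\phi$ one has $M|_\Phi\equiv0$, hence $\sigma^\ast L|_\Phi=F|_\Phi$ is big, and Lemma~\ref{vog} expresses $[Z]=\sigma_\ast(M^{k-1}\cdot F)$ through $F$ along these fibres. I would try to show, using $\beta(Z)=0$, that this cycle cannot split, i.e.\ is supported on a single integral $(n-k)$-cycle; equivalently, one must rule out $(-K_X)^k$ being a positive combination of several non-proportional component classes. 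I expect this to be the main obstacle.

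Finally, granting irreducibility, I would localize at the generic point $\eta$ of $Z'$, where $R=\OS_{X,Z'}$ is regular of dimension $k$ and $\aid=I_Z\cdot R$ is an $\fm$-primary complete intersection. The de Fernex--Ein--Musta\c{t}\u{a} inequality \cite{DFEM} then gives $\lct(\aid)^k\,e(\aid)\geq k^k$, that is $a=e(\aid)\geq\bigl(k/\lct_\eta(X,Z)\bigr)^k$. The last point to secure is that equality forces the global log canonical threshold $\lct(X,Z)=\tfrac{k}{r(n+1)}$ to be computed by a valuation centered at $\eta$, so that $\lct_\eta(X,Z)=\tfrac{k}{r(n+1)}$ and hence $a\geq(r(n+1))^k$, i.e.\ $l\geq(n+1)^k$. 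Verifying that the threshold is attained along $Z'$ rather than at a deeper center, and keeping track of rational versus numerical equivalence in the cycle identities, are the secondary technical points.
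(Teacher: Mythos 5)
Your reduction of the final bound to a multiplicity estimate at the generic point of $Z'$, and your plan to finish with the inequality $\lct(\aid)^k e(\aid)\geq k^k$ of \cite{DFEM}, do match the endgame of the paper's argument. But the two claims you defer --- that the log canonical threshold of $(X,Z)$ is computed by a divisor centered along a codimension-$k$ component of $Z$, and that $Z$ is irreducible --- are not side conditions ``to secure''; they are the substance of the proof, and your proposal contains no argument for either. Your tentative route to irreducibility via the semiample fibration defined by $M$ is, as you yourself observe, a dead end: that fibration exists for every complete intersection, extremal or not, so it cannot detect the equality $\lct(X,\frac1r Z)=\frac{k}{n+1}$. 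Without these two steps the application of \cite{DFEM} is vacuous, since localizing at $Z'$ only gives $\lct(I_Z\cdot\OS_{X,Z'})\geq\lct(X,Z)$, which is the wrong direction for the bound $e\geq (r(n+1))^k$ you need.

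For comparison, here is how the paper closes these gaps. Let $E$ be a divisor computing $\lct(X,Z)$ and $a=\ord_E(I_Z)$. Combining $\beta(E)\geq 0$ (Theorem \ref{beta}) with the equality case of \eqref{hold} and the inclusion $aE\leq F$ yields the identity of volume functions $\vol_Y(\sigma^\ast(-K_X)-xF)=\vol_Y(\sigma^\ast(-K_X)-xaE)$ for all $x$. One then invokes the finite generation of the graded sequence $\aid_\bullet=\sigma_\ast\OS_Y(-\bullet E)$ from \cite{blum2016divisors} to realize $E$ as the exceptional divisor $E_W$ of a blow-up $\pi:W\to X$ on which the volume is the polynomial $(\pi^\ast(-K_X)-xaE_W)^n$ for small $x$; comparing its lowest-order correction term $x^s$ (with $s=\codim\sigma(E)$) against the expansion \eqref{bin}, whose lowest-order term is $x^k$, forces $s=k$, so the center of $E$ is a component of $Z$. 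Irreducibility is then extracted from the coefficient identity \eqref{ded} at degree $x^k$: writing $[Z]=a'Z'+\sum_ia_i[Z_i]$ and comparing $a'=e(I_Z\cdot\OS_{X,Z'})$ (Lemma \ref{vog}) with $\frac{a^k}{m^k}e(\aid_m\cdot\OS_{X,Z'})$ via $\aid_{am}\supset\overline{I_Z^m}$ shows that $(-rK_X)^{n-k}$ paired with an effective cycle vanishes, killing all extra components. Only after this does the \cite{DFEM} inequality, applied to $\aid_m\cdot\OS_{X,Z'}$ rather than to $I_Z$ itself, produce $l\geq(n+1)^k$. You would need to supply arguments of comparable strength for both missing steps before your outline becomes a proof.
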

\begin{proof}

Let $\sigma:Y\to X$ be a log resolution of $(X,Z)$, with $\sigma^{-1}I_Z\cdot \OS_Y=\OS_Y(-F)$ for some Cartier divisor $F$ on $Y$. Write $F=\sum a_iE_i$, where $a_i=\ord_{E_i}(Z)$. Because $X$ is smooth, we have 
$$
\lct(X,Z)=\min_{i}\frac{A_X(E_i)}{a_i}.
$$
Let $E$ be a divisor that computes $\lct(X,Z)$ and $a=\ord_E(Z)$. We want to show first that the center of $E$ on $X$ has the same dimension as $Z$. By Theorem \ref{beta}, we know that $\beta(E)\geq 0$, so 
$$
\frac{A_X(E)}{a}\geq \frac{1}{a(-K_X)^n}\int_0^\infty \vol_Y(\sigma^\ast(-K_X)-xE)dx.
$$
Then because all the equality holds in \eqref{hold}, we have
$$
\frac{1}{(-K_X)^n}\int_0^\infty \vol_Y(\sigma^\ast(-K_X)-xF)dx=\lct(X,Z)=\frac{A_X(E)}{a}.
$$
Consequently,
$$
\frac{1}{(-K_X)^n}\int_0^\infty \vol_Y(\sigma^\ast(-K_X)-xF)dx\geq \frac{1}{a(-K_X)^n}\int_0^\infty \vol_Y(\sigma^\ast(-K_X)-xE)dx.
$$
By a change of variable for the integral on the RHS of the above inequality, we get
$$
\frac{1}{(-K_X)^n}\int_0^\infty \vol_Y(\sigma^\ast(-K_X)-xF)dx\geq\frac{1}{(-K_X)^n}\int_0^\infty \vol_Y(\sigma^\ast(-K_X)-xaE)dx.
$$
Also, because $F\geq aE$, for all $x$ we have
$$
\vol_Y(\sigma^\ast(-K_X)-xF)\leq \vol_Y(\sigma^\ast(-K_X)-xaE).
$$
Then there is an equality of volumes:
\begin{equation}\label{vo}
\vol_Y(\sigma^\ast(-K_X)-xF)=\vol_Y(\sigma^\ast(-K_X)-xaE),
\end{equation}
for all $x$. 

Now let $\aid_m=\sigma_\ast\OS_Y(-mE)$ for any integer $m$. Then $\aid_m$ defines a subscheme of $X$ the support of which is $\sigma(E)$. 
Combining the conclusions from \cite[Theorem 1.3, Theorem 1.4, Proposition 1.5]{blum2016divisors}, we see that the graded sequence of ideals $\aid_\bullet$ is finitely generated. In addition, suppose $\aid_\bullet$ is generated in degree up to $m$. Then we have that 
$$
\pi:W:=Bl_{\aid_m} X\to X
$$
is the blow-up of $X$ along $\aid_m$, and $\pi^{-1}\aid_m\cdot\OS_W=\OS_W(-mE_W)$, where $E_W$ is the prime exceptional divisor on $W$ that induces the same divisorial valuation as $E$ on $Y$.

By replacing $Y$ with a common log resolution of $Y$ and $W=Bl_{\aid_m} X$, we may assume that $\sigma:Y\to X$ factors through the blow-up $\pi:W \to X$, and $\sigma^{-1}\aid_{am}\cdot\OS_Y=\OS(-mD)$ for some divisor $D$ on $Y$. Note that $D$ is  the pullback of $aE_W$ from $W$ to $Y$. Also because $aE\leq F$, we have 
$$
\sigma_\ast\OS_Y(-mF)\subset \sigma_\ast\OS_Y(-maE)=\aid_{am},
$$
Then consider the inverse image sheaves of corresponding ideals under the map $\sigma$, we have that $mD\leq mF$. Therefore the divisor $D$ satisfies the relation
$$
aE\leq D\leq F.
$$
Now that we have \eqref{vo}, we get the following equality:
\begin{equation*}
\vol_Y(\sigma^\ast(-K_X)-xF)=\vol_Y(\sigma^\ast(-K_X)-xD)=\vol_Y(\sigma^\ast(-K_X)-xaE).
\end{equation*}
Since $\sigma^\ast(-K_X)-xD$ is the pullback of $\pi^\ast(-K_X)-xaE_W$ from $W$ to $Y$,  we have the equality of the volumes:
\begin{equation}\label{vol}
\vol_Y(\sigma^\ast(-K_X)-xF)=\vol_W(\pi^\ast(-K_X)-xaE_W).
\end{equation}

Assume $\sigma(E)=\pi(E_W)$ is of codimension $s$ in $X$. Now on the left hand side of \eqref{vol}, for $x< 1/r$, by a change of variable, we know from formula \eqref{bin} that
\begin{equation}\label{lhs}
\vol_Y(\sigma^\ast(-K_X)-xF)=(-K_X)^n\cdot\left(1+\sum_{i=k}^n (-1)^{i+k-1}\bi{n}{i} \bi{i-1}{k-1}(rx)^i\right).
\end{equation}
On the right hand side of \eqref{vol}, for sufficiently small $x$, the divisor $\pi^\ast(-K_X)-xaE_W$ is ample on the blow-up $W$. Therefore we have
$$
\vol_W(\pi^\ast(-K_X)-xaE_W)=(\pi^\ast(-K_X)-xaE_W)^n.
$$
Expanding the intersection number, we get a polynomial in terms of $x$. Note that the dimension of $\pi(E_W)$ is $n-s$, so we have $(\pi^\ast(-K_X))^iE_W^{n-i}=0$ when $n-s<i<n$. Therefore
\begin{equation}\label{rhs}
(\pi^\ast(-K_X)-xaE_W)^n=(-K_X)^n+(-1)^s\bi{n}{s}(\pi^\ast(-K_X))^{n-s}a^sE_W^{s}x^s+O(x^{s+1}).
\end{equation}
Compare the above two polynomials on the right hand side of \eqref{lhs} and \eqref{rhs}. We see that $k=s$, so $\dim \sigma(E)=n-k=\dim Z$. Therefore we know that the center of $E$ on $X$ is an irreducible component of $Z$.

Next, we want to show that $Z$ is irreducible. Using formula \eqref{bin} for $\vol(\sigma^\ast (-rK_X)-xF)$, we see that the coefficient of $x^k$ in $\vol(\sigma^\ast (-rK_X)-xF)$ is $-(-rK_X)^n\bi{n}{k}$. If we compare it with the coefficient of $x^k$ in the expansion  of the polynomial $\vol(\pi^\ast (-rK_X)-xaE_W)=(\pi^\ast (-rK_X)-xaE_W)^n$, we get that
\begin{equation}\label{ded}
(-rK_X)^n=a^k(-1)^{k-1}\pi^\ast (-rK_X)^{n-k}E_W^k.
\end{equation}
Now suppose $Z$ is not irreducible. Write $[Z]=a'Z'+\sum_i a_i[Z_i]$, where $Z'$ is the $(n-k)$-cycle defined by the reduced irreducible component corresponding to the center of $E$. Note that 
$$
\aid_m^a=\aid_{am}\supset \sigma_\ast \OS_Y(-mF)=\overline{I^m_Z},
$$
where $\overline{I^m_Z}$ is the integral closure of the ideal $I^m_Z$. Since $\overline{I^m_Z}$ has the same multiplicity as $I^m_Z$, we know that
$$
a^ke(\aid_m\cdot \OS_{X,Z'})\leq e(\overline{I^m_Z}\cdot \OS_{X,Z'})= m^ke(I_Z\cdot \OS_{X,Z'}),
$$
By the computation in the proof of Lemma \ref{vog}, we know that 
$$
e(I_Z\cdot \OS_{X,Z'})=l(\OS_{Z,Z'})=a'.
$$
Therefore, we know that 
$$
a'\geq \frac{a^k}{m^k}e(\aid_m\cdot \OS_{X,Z'}).
$$
By \eqref{ram}, we have
$$(-1)^{k-1}\pi_\ast E_W^k=\frac{1}{m^k}e(\aid_m\cdot \OS_{X,Z'})Z'
$$ 
for sufficiently divisible $m$. Together with \eqref{ded}, we have
$$
(-rK_X)^{n-k}\cdot\left(\left(a'-\frac{a^k}{m^k}e(\aid_m\cdot \OS_{X,Z'})\right)Z'+\sum_i a_i[Z_i]\right)=0.
$$
Since the cycle we intersect with $(-rK_X)^{n-k}$ in the above equation is effective, we know that $Z$ is irreducible supporting on $Z'$ and
\begin{equation*}
(-rK_X)^n= \deg_{(-rK_X)} Z' \frac{a^k}{m^k}e(\aid_m\cdot \OS_{X,Z'}).
\end{equation*}
Note that 
$
\frac{A_X(E)}{a}=\lct(X,Z)=\frac{k}{(n+1)r},
$ 
so we have
$
a=\frac{A_X(E)(n+1)r}{k}.
$
Consequently,
\begin{equation}\label{im}
(-K_X)^n= \deg_{(-K_X)}Z' \left(\frac{n+1}{k}\right)^k\frac{A_X(E)^ke(\aid_m\cdot \OS_{X,Z'})}{m^k}.
\end{equation}
The log discrepancy of $E$ doesn't change after we localize at $Z'$ because $Z'$ is the center of $E$. Working on $\spec \OS_{X,Z'}$, we next want to show that 
$$
\frac{A_X(E)^ke(\aid_m\cdot \OS_{X,Z'})}{m^k}\geq k^k.
$$
Note that by the definition of log canonical threshold and $\aid_m=\sigma_\ast \OS_Y(-mE)$, we have
$$
\lct(\aid_m\cdot \OS_{X,Z'})\leq \frac{A_X(E)}{\ord_E(\aid_m\cdot \OS_{X,Z'})}=\frac{A_X(E)}{m}.
$$
Since $\spec \OS_{X,Z'}$ is smooth, we know that
$$
\frac{A_X(E)^ke(\aid_m\cdot \OS_{X,Z'})}{m^k}\geq \lct(\aid_m\cdot \OS_{X,Z'})^ke(\aid_m\cdot \OS_{X,Z'})\geq k^k,
$$
where the last inequality follows from \cite{DFEM}. Therefore from \eqref{im} we have the following inequality:
$$
(-K_X)^n\geq (n+1)^k(-K_X)^{n-k}Z',
$$
or equivalently, 
$$
(-K_X)^{n-k}[(-K_X)^k-(n+1)^kZ']\geq 0.
$$
Since $(-K_X)^k\sim_{rat} \frac{1}{r^k}Z$, we may assume $(-K_X)^k\sim_{rat} lZ'$ for some rational number $l$. Then by the ampleness of $-K_X$, we know that $l\geq (n+1)^k$. This finishes the proof of Proposition~\ref{pop}.
\end{proof}
\begin{proof}[Proof of Theorem \ref{eq}]
We have already seen in Example \ref{expn} that $\alpha^{(k)}(\PP^n)=k/(n+1)$ and it is realized by linear subspaces of codimension $k$, which gives $(1)\Rightarrow (2)$. We also have $(2)\Rightarrow (3)$ by Proposition \ref{pop}. 
Finally by Corollary~\ref{app}, we know that when $k$ divides $n$, we have $(3)\Rightarrow (1)$ in Theorem \ref{eq}.
\end{proof}

\bibliographystyle{alpha}
\bibliography{ref}
\end{document}